\title[Sur les représentations ultramétriques]{Résultat géométrique sur les représentations de groupes réductif sur un corps ultramétrique}
\author{Rodolphe \textsc{Richard}}
\newcommand{\kk}{\mathbf{k}}
\newcommand{\abs}[1]{{\left|{#1}\right|}}
\newcommand{\Ik}{\mathscr{I}_\kk}
\newcommand{\an}{\textrm{an}}
\newcommand{\SL}{\mathrm{SL}}
\newcommand{\GL}{\mathrm{GL}}
\newcommand{\Ad}{\mathrm{Ad}}
\newcommand{\lie}[1]{{\mathfrak{#1}}}
\newcommand{\gl}{\lie{gl}}
\newcommand{\Id}{\mathrm{Id}}
\newtheorem{theoreme}{Théorème}[section]
\newcommand{\Nm}[1]{{\left\|{#1}\right\|}}
\newcommand{\R}{\mathbf{R}}
\newcommand{\Z}{\mathbf{Z}}
\newcommand{\z}{\lie{z}}
\newcommand{\Hom}{\mathrm{Hom}}
\newcommand{\tens}{\mathop{\otimes}}
\newtheorem{proposition}{Proposition}[section]
\newcommand{\NM}[1]{{\left\vert\kern-0.25ex\left\vert\kern-0.25ex\left\vert #1 
    \right\vert\kern-0.25ex\right\vert\kern-0.25ex\right\vert}}
\newtheorem{corollaire}{Corollaire}[section]
\newtheorem{lemme}{Lemme}[section]
\newcommand{\Aff}{\mathbf{Aff}}
\begin{document}
\maketitle
\setcounter{tocdepth}{0}
\tableofcontents

\section*{Notes d'Édition}

Ce texte est une réédition d'un article publié au sein de la thèse de doctorat~\cite{these} de l'auteur, datant de 2009. Il s'agit de son Chapitre~\uppercase\expandafter{\romannumeral 5\relax}.  Hormis la présente section, cette édition est pour l'essentiel un \emph{verbatim}, avec quelques corrections mineures, et une correction majeure, ne concernant que la caractéristique positive, que nous discutons~p.~\pageref{Erratum}; espérons qu'en revanche cette édition n'introduit que le moins possible de maladresses.

 Cette édition allège également l'annexe de deux sections, de rappels sur les~\emph{variétés algébriques} et \emph{groupes algébriques}; le lecteur, si besoin, saura les retrouver aisément dans d'autres références, par ex.~\cite{Bor91} et~\cite{PR94}, qui sont standard, exhaustives, et bien meilleures. On abrège enfin les rappels sur les normes ultramétriques et remarque que l'on peut probablement se passer de recourir à l'énoncé~\ref{LemmeC1}.

\subsection*{\emph{Leitfaden}}\label{Leitfaden} À quelle fin ce texte, dans quel contexte s'inscrit-il? C'est un analogue \emph{ultramétrique} de~\cite{RS09}. Ce dernier traite du cas \emph{archimédien}. Une version de~\cite{RS09} est contenue dans l'article publié~\cite{RichardShah}, qui améliore ce résultat, en y développant de plus des «~conséquences~», plus importantes du point de vue dynamique. Ce développement, postérieur à~\cite{these}, une astuce en fait, y est écrit de telle sorte que, connaissant les résultats ultramétriques ici présents, on obtient par la preuve mot-à-not l'analogue ultramétrique des résultats de~\cite{RichardShah}, ces «~conséquences~» y compris.

Des applications dynamiques de~\cite{RS09} et du présent texte sont développées au Chapitre~\uppercase\expandafter{\romannumeral 6\relax}
 de la thèse sus-citée.

Les résultats de~\cite{RichardShah} dans leur intégralité (et pas seulement de~\cite{RS09}),
qui sont archimédiens, et leur analogue ultramétrique, qui repose sur le présent article, sont à la base du travail de~\cite{RichardZamojski} en dynamique homogène, qui dépasse de loin ceux de ce Chapitre~\uppercase\expandafter{\romannumeral 6\relax}.

Les travaux de~\cite{RichardZamojski} sont à leur tour la base du travail~\cite{RichardYaffaev} établissant la conjecture d'André-Pink, dans certains cas seulement, mais, en revanche, sous une forme améliorée: on y détaille l’équidistribution et l’adhérence topologique en plus de l’adhérence de Zariski.

Pour résumer, le texte ici présent est un des socles sur lequel sont bâtis les travaux que nous venons de passer en revue,~\cite{RichardShah}, \cite{RichardZamojski}, \cite{RichardYaffaev}. Ce texte n'était, jusqu'à présent, pas diponible dans publication à revue par des pairs. Son usage dans dans~\cite{RichardYaffaev} est l'occasion de le rééditer et d'effectuer cette publication.

Nimish \textsc{Shah} est coauteur de~\cite{RS09} et~\cite{RichardShah}; Tomasz \textsc{Zamojski} l'est pour~\cite{RichardZamojski}; Andreï \textsc{Yaffaev} pour~\cite{RichardYaffaev}.

\subsection*{Mise en perspective} À l'époque de ce résultat, en 2009, les outils utilisés, immeubles de Bruhat-Tits en relation avec les espaces analytiques de Berkovich, m'apparaissaient invraisemblablement sophistiqués en mesure
du résultat ambitionné. Les outils utilisés, dans leur forme aboutie, étaient même assez neufs, contemporains. Avec le recul, les résultats ici obtenus peuvent se voir comme une question de \emph{stabilité}, au sens de la théorie géométrique des invariants de Mumford, mais dans un contexte (ultra)métrique plutôt que géométrique: plutôt que de se demander, dans une représentation linéaire, si une orbite donnée est adhérente de Zariski en l'origine~$0$, il s'agit de savoir si elle s'approche beaucoup de~$0$, au sens métrique. Les questions de stabilité s'étudient naturellement dans espaces de drapeaux «~\emph{flag varieties}~» chez Mumford, autrement connus comme immeubles sphériques de Tits. L'analogue pour les corps ultramétriques est l'immeuble de Bruhat-Tits; l'immeuble sphérique, ou plutôt sa forme vectorielle, est l'immeuble de Bruhat-Tits sur ce corps muni de la structure ultramétrique discrète. 

Une forme métrique, mais archimédienne, de la stabilité était déjà étudiée par la théorie de Mumford-Ness (voir édition récentes de~\cite{GIT}). Une forme ultramétrique, puis arithmétique, l'était par~\cite{Burnol}.

Cela justifie \emph{a posteriori} les outils utilisés ici, voire ceux de~\cite{RS09}. Pour conclure, mentionnons que c'est ce point de vue «~stabilité~» d'où provient l'astuce utilisée dans l’article~\cite{RichardShah}.

\section*{Introduction originelle}
Cet article adapte les résultats de \cite{RS09}\footnote{Dont l'article publié~\cite{RichardShah} est une version augmentée. Voir \emph{Leifaden} en page~\pageref{Leitfaden}.} au cas d'un groupe algébrique semi\-sim\-ple~$G$ sur un corps local~$\kk$ muni d'une valeur absolue ultramétrique~$\abs{-}$.

Nous suivons, dans les grandes lignes, la méthode développée dans~\cite{RS09} pour le contexte archimédien (dont nos Propositions~\ref{Prop31} et~\ref{Prop34} reprennent certains arguments). Pour pallier l'absence du théorème de décomposition de Mostow (\cite{Mos55}) pour~$G(\kk)$ ainsi que la propriété de convexité de l'application exponentielle, qui n'est plus partout définie, nous considérons le plongement~$\Theta:\Ik(G)\to G^\an$ de l'\emph{immeuble de Bruhat-Tits}~$\Ik(G)$ de~$G$ sur~$\kk$ dans l'\emph{espace analytique}~$G^\an$, au sens de Berkovich, associé à~$G$.

Notre démonstration repose en effet sur le Théorème~\ref{TheoF3} de l'annexe, qui se base sur les travaux~\cite{RTW09} de Bertrand Rémy, Amaury Thuillier et Anette Werner. Ces auteurs généralisent une construction du chapitre 5 de~\cite{Ber90}, où V.~Berkovich se restreint aux groupes de Chevalley (semi-simples déployés). Le Théorème~\ref{TheoF3} met à profit les propriétés de convexité dans~$\Ik(G)$, et remplace la propriété de convexité de l'application exponentielle de~\cite{RS09} par la convexité de fonctions de la forme~$x\mapsto \abs{f(x)}$ (Proposition~\ref{Prop36}) lorsque~$f$ est dans~$\kk[G]$, une fonction régulière.

La décomposition de Mostow est remplacée par la décomposition moins
précise du Théorème~\ref{TheoF1} et sa conséquence en la Proposition~\ref{Prop21}. Pour obtenir l'énoncé~\ref{Prop21}, notre démonstration utilise l'existence de points fixes pour l'action
de groupes d'isométries compacts sur les immeubles de Bruhat-Tits. Ces propriétés
découlent de l'existence de métriques hyperboliques, et justifient le choix
de la géométrie ultramétrique au sens de Berkovich, plutôt que rigide, qui permet de considérer des
espaces métriques complets.

Que Bertrand Remy, Amaury Thuillier et Georges Tomanov reçoivent ici mes
remerciements pour leur accueil chaleureux et leur conversation enrichissante à
l'occasion de mon déplacement à l'institut Camille Jordan de l'université Lyon 1
Claude Bernard.

C'est en côtoyant, à l'IRMAR, Antoine Chambert-Loir, Antoine Ducros et Jérôme
Poineau que j'ai pu me familiariser avec les espaces de Berkovich. Que cet article
leur témoigne de ma reconnaissance.

C’est la relecture par Emmanuel Breuillard qui a conduit à l’erratum.

\addtocontents{toc}{\protect\setcounter{tocdepth}{1}}
\section{Hypothèses et Énoncé}

Nous convenons qu'un \emph{groupe algébrique linéaire}, et plus généralement un sous-groupe algébrique linéaire, est supposé \emph{affine de type fini}, \emph{réduit} et \emph{connexe}.
La nécessité de ces hypothèse n'a pas été vérifiée : notons que, 
comme notre résultat principal ne concerne que les groupes de points rationnels, il peut s'appliquer aux groupes non réduits, quitte à passer au sous-groupe réduit associé. Remarquons aussi que, dans un groupe algébrique linéaire, le centralisateur d'un sous-groupe algébrique linéaire n'est pas toujours réduit, ni toujours connexe. Par exemple, en caractéristique non nulle~$p$, le centre de~$\SL(p)$ est connexe, de dimension~$0$ mais a une algèbre de Lie
non nulle. En caractéristique~$0$ le centre de~$\SL(2)$ n’est pas connexe.

Soit~$G$ un groupe algébrique linéaire sur un corps~$\kk$. Étant donnée une représentation linéaire de degré fini~$\rho : G \rightarrow \GL(V)$, on notera~$\Ad_\rho : G \rightarrow GL (\gl(V ))$ l'action par conjugaison~$G$ sur~$\gl(V)$.
\begin{equation}\label{eq1}
\text{Pour~$g$ dans~$G$, et un endomorphisme~$e$ de~$V$, on a~$\Ad_\rho(g)=\rho(g)e\rho(g)^{-1}$.}
\end{equation}
Pour tout sous-groupe algébrique linéaire~$H$ de~$G$, on notera~$C_H(\Ad_\rho)$ l'espace vectoriel sur~$\kk$ engendré par les coefficients matriciels de l'action de~$H$ sur~$\gl(V)$. L'espace~$C_H(\Ad_\rho)$ est formé de fonctions régulières sur~$H$. Étant donné un ensemble~$\Omega$ de points de~$H$, nous considérons la propriété suivante.
\begin{equation}\tag{\text{$\ast$}}\label{*}
\text{Tout coefficient matriciel de~$\Ad_\rho$ qui s'annule sur~$\Omega$ s'annule en fait sur~$H$.} 
\end{equation}
Cette propriété est notamment vérifiée si~$\Omega$ est Zariski dense dans~$H$. Mettons en exergue deux autres conditions sur le $H$-module~$V$.
\begin{multline}\tag{\text{$\ast\ast$}}\label{**}
	\text{L'action~$\rho$ de~$H$ sur~$V$ est telle que~$V^H$, le plus grand sous-module de}\\
	\shoveleft{\text{points fixes, a un unique supplémentaire $H$-stable.}}
\end{multline}
\begin{multline}\tag{\text{$\ast\ast^\prime$}}\label{**'}
	\text{En surcroît de~\eqref{**}, ce supplémentaire de~$V^H$, comme représentation }\\ \shoveleft{\text{de~$H$, n'a pas de quotient isomorphe à la représentation triviale.}}
\end{multline}
La condition \eqref{**} revient à supposer que~$V$ ne contient pas d'extension non triviale \emph{de} la représentation triviale, et la condition~\eqref{**'} que~$V$ ne contient pas non
plus d'extension non triviale \emph{par} la représentation triviale. Les conditions~\eqref{**}~et~\eqref{**'} sont automatiquement vérifiées si le $H$-module~$V$ est semi-simple. C'est le cas si~$H$ est réductif et le corps~$\kk$ de caractéristique nulle.

\subsubsection*{Remarqes:}{~}
\newline\noindent
\textit{i)}\label{Remarquei}
 Lorsque la condition~\eqref{**} est vérifiée, le supplémentaire~$H$-stable de~$V^H$ est le noyau d'un \emph{unique} projecteur~$H$-équivariant de~$V$ sur~$V^H$ (le \emph{projecteur de
Reynolds}, \emph{confer} \cite{Dem76}). Étant donné un morphisme $H$-équivariant~$\Phi : V \rightarrow W$ entre deux $H$-mo\-du\-les~$V$ et~$W$ satisfaisant~\eqref{**}, ce morphisme commute aux projecteurs sur le lieu fixe si~$V$ satisfait la condition~\eqref{**'}.
\newline
\textit{ii)}\label{Remarqueii}
En revanche, en caractéristique non nulle~$p$, l'action adjointe de ~$\GL(p)$ sur~$\gl(p)$ ne vérifie pas la condition~\eqref{**}. Le sous-espace fixe est la droite~$\kk\cdot\Id$ formée des homothéties. Or cette droite n'a pas de supplémentaire stable: c'est déjà le cas 
pour l'action des matrices de permutation sur le sous-espace diagonal. Lorsque~$p$ est impair, l'action induite de~$\GL(p)$ sur~$\frac{\lie{gl}(p)}{\kk\cdot\Id}$ satisfait la condition~\eqref{**} car il n'y a pas d'élément invariant non nul (cf.~\cite{Bou60},~\S6, Exercice~24). Elle ne satisfait pas la condition~\eqref{**'} car l'action quotient~$\left.\frac{\lie{gl}(p)}{\kk\cdot\Id}\middle/\frac{\lie{sl}(p)}{\kk\cdot\Id}\right.$ est triviale.

Il nous faut encore considérer la propriété suivante\footnote{N.d.É. Voir \emph{Erratum} p.~\pageref{Erratum}.}, de Richardson.
\begin{equation}\tag{\text{$\ast\ast\ast$}}\label{***}
\text{Le sous-groupe~$H$ de~$G$ est «~fortement réductif dans~»~$G$.}
\end{equation}
En caractéristique nulle elle est  satisfaite si et seulement si~$H$ est réductif. Un propriété aisément vérifiable qui implique~\eqref{***} est la suivante, dite~$H$ est «~réductif dans~» $G$ selon les termes de~\cite{RichardShah}.
\begin{equation}\tag{\text{$\ast\ast\ast^\prime$}}\label{***'}
\text{L'action de~$H$ action sur~$\lie{g}$ est semi-simple.}
\end{equation}

Notre résultat principal est le suivant. Pour motiver cet énoncé nous ren\-voy\-ons à~\cite{RS09} et~\cite{Ric09}.
\begin{theoreme}\label{Theo11}

Soit~$(\kk,\abs{-})$ un corps local normé ultramétrique, soit~$G$ un groupe linéaire algébrique semisimple connexe sur~$\kk$, et soit~$H$ un sous-groupe algébrique linéaire fortement réductif dans~$G$. Notons~$Z$ le centralisateur de~$H$ dans~$G$. Alors il existe une partie~$Y$ de~$G(\kk)$, fermée pour la topologie ultramétrique, et telle que
\begin{enumerate}[label=\textrm{\arabic*}.]
\item[\textup{1.}] d'une part on ait~$G(\kk)=Y\cdot Z(\kk)$,
\item[\textup{2.}] d'autre part, étant donnés
\begin{itemize}
\item une représentation linéaire~$\rho:G\rightarrow \GL(V)$ de degré fini et définie sur~$\kk$, telle que les~$H$-modules~$\gl(V)$ et~$C_H(\Ad_\rho)$ satisfassent~\eqref{**}, et que~$\lie{gl}(V)$ satisfasse~\eqref{**'},
\item une partie non vide~$\Omega$ de~$H(\kk)$ ayant la propriété~\eqref{*},
\item une norme~$\Nm{-}$ sur~$V$, supposée homogène relativement à~$\abs{-}$,
\end{itemize}
il existe une constante~$c>0$ telle que
\begin{equation}\label{eq2}
\forall y\in Y,~\forall v\in V,~\sup_{\omega\in\Omega}\Nm{\rho(y\cdot \omega)(v)}\geq\Nm{v}/c.
\end{equation}
\end{enumerate}
\end{theoreme}

Fixons~$\rho$. L'inégalité~\eqref{eq2} est vérifiée pour toute constante~$c$ lorsque le vecteur~$v$
est nul. Le théorème est donc vérifié, avec~$Y = G(\kk)$, si~$V$ est de dimension nulle.
\emph{Dorénavant nous supposerons que la représentation~$\rho$ a un degré non nul}. \label{constantes coeffs}	En particulier les fonctions constantes sont des coefficients matriciels. En effet, tout coefficient diagonal de~$g\mapsto \Ad_\rho(g)\Id_V$ vaut la constante~$1$. Ainsi~$C_G(\Ad_\rho)$ et~$C_H(\Ad_\rho)$ seront non nuls. Dans ce cas, la non vacuité de~$\Omega$ découle de la
condition~\eqref{*}.

Remarquons que pour établir la formule~\eqref{eq2}, on peut remplacer~$\Omega$ par un sous-ensemble~$\Omega_b$ de~$\Omega$, car cela a pour effet de diminuer le membre de gauche de l'inégalité, sans modifier le membre de droite. Montrons que, comme~$\Omega$ satisfait~\eqref{*}, il
existe un sous-ensemble fini~$\Omega_b$ de~$\Omega$ satisfaisant la propriété~\eqref{*}.

\begin{proof}
La condition~\eqref{*} signifie que lorsque~$\omega$ décrit~$\Omega$, les morphismes d'é\-va\-lu\-a\-tion~$f \mapsto f (\omega)$, définis sur~$C_H(\Ad_\rho)$, engendrent~$C_H(\Ad_\rho)^\vee$, le dual algébrique de l’epace~$C_H(\Ad_\rho)$. 
Comme V est de dimension finie,~$C_H(\Ad_\rho)$, qui
est un quotient de~$\gl(V ) \otimes \gl(V )^\vee$, est de dimension finie.
Il suffit donc d'extraire
de la famille génératrice précédente une base, nécessairement finie, et de choisir pour~$\Omega_b$ un sous-ensemble fini de~$\Omega$ paramétrant cette base.
\end{proof}

\emph{Dorénavant~$\Omega$ sera supposé borné dans~$H(\kk)$.}

Notre démonstration utilise les énoncés~\ref{TheoF3} et~\ref{TheoF1} de l'annexe, à laquelle nous renvoyons pour les définitions et conventions utilisées, en particulier concernant la notion de convexité telle que définie dans la section~\ref{sectionF}. Pour plus d'approfondissement,
on pourra également consulter~\cite{RTW09}, ainsi que~\cite[chapitre 5]{Ber90} pour le cas des groupes semisimples déployés (« de Chevalley »).

Dans la section suivante, nous rappelons la situation et fixons les notations
utilisées jusque la fin de la démonstration, soit les sections~\ref{section2}, \ref{section3} et~\ref{section4}. Nous y explicitons en particulier la
partie~$Y $. La première conclusion du Théorème~\ref{Theo11} résulte de la Proposition~\ref{Prop21},
que nous déduisons de l'énoncé~\ref{TheoF1}. Nous énonçons également, avec la Proposition~\ref{Prop22}, une variante effective de la seconde conclusion du Théorème~\ref{Theo11} pour la
partie~$Y$ construite. 

Dans la section~\ref{section3}, nous réunissons quelques énoncés indépendants qui seront
utilisés dans la démonstration de la Proposition~\ref{Prop22}. Les énoncés~\ref{Prop31} à~\ref{Prop34} reprennent des arguments de~\cite{RS09}. La démonstration de la Proposition~\ref{Prop36}
repose sur l'énoncé~\ref{TheoF3}. Le cœur de la démonstration de la
Proposition~\ref{Prop22} occupe la section~\ref{section4}.

\section{Notations}\label{section2}

Rappelons la situation. Nous désignons par~$\kk$ un corps local muni d'une valeur
absolue ultramétrique~$\abs{-}$, par~$G$ un groupe algébrique linéaire semi-simple sur~$\kk$,
par~$H$ un sous-groupe algébrique linéaire fortement réductif, et notons~$Z_G(H)$ le centralisateur de~$H$ dans~$G$, vu
comme groupe algébrique affine non nécessairement réduit. Notons que~$Z_G (H)$ n'interviendra toutefois que via son groupe~$Z_G (H)(\kk)$ des points rationnels.

Nous nous sommes fixés~$\rho : G \rightarrow \GL(V )$, une représentation linéaire de~$G$ de degré fini non nul et définie sur~$\kk$ et~$\Nm{-} : V \rightarrow \R$ une norme $(\kk,\abs{-})$-homogène sur~$V$. Nous notons~$\gl(V )$ l'algèbre de Lie des endomorphismes de~$V$, et~$\Ad_\rho$ la
représentation adjointe~\eqref{eq1} de~$G$ sur~$\gl(V )$. Le sous-$G$-module de la représentation régulière~$\kk[G]$ engendré par les coefficients matriciels de~$\Ad_\rho$ est noté $C_G(\Ad_\rho)$, et le~$H$-module formé de
la restriction à~$H$ de ces fonctions régulières est noté~$C_H(\Ad_\rho)$.

Nous désignons par~$\Omega$ une partie bornée non vide de~$H(\kk)$ sur laquelle aucune
fonction régulière non nulle sur~$H$ issue de~$C_H(\Ad_\rho)$ ne s'annule identiquement.

Notons~$\z$ le centralisateur de~$H$ dans~$\gl(V )$. D'après l'hypothèse~\eqref{**} pour~$\Ad_\rho$, il existe un unique projecteur $H$-équivariant de~$\gl(V )$ sur~$\z$ (cf. Remarque~\emph{i)} en page~\pageref{Remarquei}).
Comme~$V$ est de dimension non nulle, les coefficients diagonaux de~$h\mapsto \Ad_\rho(h)(\Id_V )$ forment un coefficient matriciel constant non nul de~$\Ad_\rho$. En outre le sous-module fixe de~$C_H(\Ad_\rho)$ est contenu dans celui de~$\kk[H]$, qui est aussi donné par les fonctions constantes. Donc le sous-module fixe de~$C_H(\Ad_\rho)$ est le sous-module formé des fonctions
constantes, et s'identifie à~$\kk$, muni de la représentation triviale. Utilisant l'hypothèse~\eqref{**} pour~$C_H(\Ad_\rho)$ nous obtenons un projecteur~$H$-équivariant de~$C_H(\Ad_\rho)$ sur~$\kk$. En vertu de l'hypothèse~\eqref{**'} pour~$\Ad_\rho$, tout morphisme~$H$-équivariant~$\gl(V)\rightarrow C_H(\Ad_\rho)$ commute aux projecteurs~$\pi_\kk$ et~$\pi_\z$ (cf. Remarque~\emph{i)} en page~\pageref{Remarquei}).

Nous nous fixons un tore déployé maximal~$T$ de~$G$ sur~$\kk$, notons
\begin{equation*}
Y(T)=\Hom(T,\GL(1))
\end{equation*}
le groupe des cocaractères et~$\Lambda=Y(T)\tens\R$ l'espace vectoriel associé. L'\emph{immeuble de Bruhat-Tits de~$G$ sur~$\kk$} est noté~$\Ik(G)$.
C'est le quotient de~$G(\kk) \times \Lambda$ par la relation d'équivalence considérée dans \cite[(1.3.2)]{RTW09} (cf. \cite[2.1]{Tit79}).
Rappelons que~$G^\an$ désigne l'espace analytique associé à~$G$, vu comme espace topologique des semi-normes multiplicatives bornées sur l'algèbre~$\kk[G]$ des fonctions régulières sur~$G$, pour la topologie de la convergence simple. Notons~$\theta:\Ik(G)\rightarrow G^\an$ une application telle que dans l'énoncé~\ref{TheoF3}, et notons~$G_\theta$ son stabilisateur \emph{à droite} dans~$G(\kk)$ qui est compact et ouvert.

Nous fixons un point~$o$ de~$\Ik(G)$, et notons~$G_o$ son stabilisateur dans~$G_\theta$. Le grou\-pe~$G_o$ est compact dans~$G(\kk)$ (\cite[3.2]{Tit79}) et Zariski dense dans~$G$~,\cite[Lemma~1.4]{RTW09}. Nous notons~$H_o$ le groupe compact~$G_o\cap H(\kk)$, et~$\Ik(G)^{H_o}$ le lieu fixe de l'action de~$H_o$ sur~$\Ik(G)$. D'après le Théorème~\ref{TheoF1}, nous pouvons choisir un compact~$C$ de~$\Ik(G)$ tel que~$\Ik(G)^{H_o}=Z_G(H)(\kk)\cdot C$.

La notion de convexité utilisée est celle introduite dans la section~\ref{sectionF}.

\subsubsection*{Définition}\label{Defi Y}
Soit alors~$Y$ le lieu des points~$y$ de~$G(\kk)$ tels que dans l'enveloppe
convexe de~$H_o \cdot y^{-1}\cdot o$, il se trouve un point de~$C$.

On notera que la construction de~$Y$ ne dépend que (de~$G$, de~$H$ et) du choix
de~$\theta$, de~$o$ et de~$C$. La partie~$Y$ ne dépend donc ni de~$\rho$, ni de~$\Omega$. L'énoncé suivant
démontre que la partie~$Y$ est fermée dans~$G(\kk)$ et satisfait la première condition du Théorème~\ref{Theo11}.

\begin{proposition}\label{Prop21}
La partie~$Y$ de~$G(\kk)$ est fermée ; l'intersection~$Y \cap Z_G (H)(\kk)$ est compacte; on a~$G(\kk) = Y \cdot Z_G (H)(\kk)$.
\end{proposition}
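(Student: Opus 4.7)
Je commencerais par établir la décomposition~$G(\kk)=Y\cdot Z_G(H)(\kk)$, qui renferme le contenu géométrique central, via le théorème du point fixe de Bruhat-Tits. Étant donné~$g\in G(\kk)$, l'orbite~$H_o\cdot g^{-1}\cdot o$ est compacte donc bornée dans l'immeuble CAT(0) complet~$\Ik(G)$; ledit théorème fournit alors un point~$p$ fixé par~$H_o$ dans son enveloppe convexe fermée. Comme~$p\in\Ik(G)^{H_o}=Z_G(H)(\kk)\cdot C$, on écrit~$p=z\cdot c$ avec~$z\in Z_G(H)(\kk)$ et~$c\in C$. Je poserais alors~$y=g\cdot z$: puisque~$z$ centralise~$H$, on a
\begin{equation*}
H_o\cdot y^{-1}\cdot o=H_o\cdot z^{-1}\cdot g^{-1}\cdot o=z^{-1}\cdot\bigl(H_o\cdot g^{-1}\cdot o\bigr),
\end{equation*}
et l'enveloppe convexe de ce dernier ensemble, image par l'isométrie~$z^{-1}$ de celle de~$H_o\cdot g^{-1}\cdot o$, contient~$z^{-1}\cdot p=c\in C$; ainsi~$y\in Y$ et~$g=y\cdot z^{-1}$.

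Pour la fermeture de~$Y$, je prendrais une suite~$(y_n)$ de~$Y$ convergeant vers~$y$ et, pour chaque~$n$, un~$c_n\in C$ situé dans l'enveloppe convexe de~$H_o\cdot y_n^{-1}\cdot o$; la compacité de~$C$ fournit une valeur d'adhérence~$c\in C$. Comme~$H_o$ est compact et l'action continue, les compacts~$H_o\cdot y_n^{-1}\cdot o$ tendent vers~$H_o\cdot y^{-1}\cdot o$ au sens de Hausdorff, et il faut alors invoquer la notion de convexité de la section~\ref{sectionF} pour conclure que~$c$ appartient à l'enveloppe convexe limite, soit~$y\in Y$. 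La compacité de~$Y\cap Z_G(H)(\kk)$ s'ensuit aisément: pour~$y\in Z_G(H)(\kk)$, la commutation de~$y$ à~$H_o$ et le fait que~$H_o$ fixe~$o$ réduisent l'orbite~$H_o\cdot y^{-1}\cdot o$ au singleton~$\{y^{-1}\cdot o\}$, d'où~$Y\cap Z_G(H)(\kk)=\{y\in Z_G(H)(\kk):y^{-1}\cdot o\in C\}$; cet ensemble est l'image réciproque d'un compact par l'application propre~$g\mapsto g^{-1}\cdot o$ (la propreté résultant de la compacité des stabilisateurs dans l'action de~$G(\kk)$ sur l'immeuble), intersectée avec le fermé~$Z_G(H)(\kk)$, donc compact.

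Le point délicat me paraît être la stabilité des enveloppes convexes sous limite de Hausdorff, invoquée dans la preuve de la fermeture: il demande un contrôle semi-continu supérieur des enveloppes convexes en termes de la notion de convexité développée à la section~\ref{sectionF}, propriété qui dépasse la seule continuité de l'action et qu'il faudra extraire soigneusement de la géométrie CAT(0) de~$\Ik(G)$.
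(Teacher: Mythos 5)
Vos arguments pour la décomposition~$G(\kk)=Y\cdot Z_G(H)(\kk)$ et pour la compacité de~$Y\cap Z_G(H)(\kk)$ sont corrects et coïncident pour l'essentiel avec ceux du texte: point fixe de Bruhat--Tits dans l'enveloppe convexe de~$H_o\cdot g^{-1}\cdot o$, écriture de ce point dans~$Z_G(H)(\kk)\cdot C$, puis translation par l'isométrie~$z^{-1}$ qui commute à~$H_o$; et, pour la compacité, réduction au fait que pour~$y$ centralisant~$H$ l'orbite~$H_o\cdot y^{-1}\cdot o$ est le singleton~$\{y^{-1}\cdot o\}$, qui doit donc appartenir à~$C$, combinée à la propreté de l'action de~$G(\kk)$ sur~$\Ik(G)$.

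En revanche, votre preuve de la fermeture de~$Y$ comporte une lacune réelle, que vous signalez d'ailleurs vous-même: la semi-continuité supérieure des enveloppes convexes par rapport à la convergence de Hausdorff n'est ni établie ni facile à établir --- dans un espace CAT(0) général les enveloppes convexes sont notoirement mal contrôlées, et rien dans la section~\ref{sectionF} ne fournit un tel énoncé. Surtout, cette machinerie est superflue, et c'est l'observation que vous manquez: la définition de~$Y$ ne dépend de~$y$ que par l'intermédiaire du point~$y^{-1}\cdot o$, de sorte que~$Y$ est invariante à gauche sous le stabilisateur~$G_o$ de~$o$, c'est-à-dire réunion de classes~$G_o\,y$. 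Or~$G_o$ est un sous-groupe \emph{ouvert} de~$G(\kk)$ (phénomène propre au cadre ultramétrique, sans analogue archimédien): $Y$ et son complémentaire, tous deux réunions de telles classes, sont donc ouverts, et~$Y$ est fermée (et même ouverte). Concrètement, dans votre argument séquentiel, on a~$y_n\in G_o\,y$ pour~$n$ assez grand, donc~$y_n^{-1}\cdot o=y^{-1}\cdot o$: la suite d'orbites est stationnaire et il n'y a rien à passer à la limite. C'est l'argument du texte, et c'est ainsi qu'il faut corriger le vôtre.
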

\begin{proof}
Montrons que la partie~$Y$ est fermée dans~$G(\kk)$. Par construction,
la partie~$Y$ est invariante à gauche sous le stabilisateur de~$o$. C'est donc une partie stable sous~$G_o$, qui est ouvert. Le complémentaire de~$Y$ est donc ouvert, car
stable sous~$G_o$.

Montrons que le saturé~$Y \cdot Z_G (H)(\kk)$ de~$Y$ par~$Z_G (H)(\kk)$ vaut~$G(\kk)$. Soit~$g$
dans~$G(\kk)$, et formons l'enveloppe convexe~$\langle H_o g^{-1}o\rangle$ de~$H_o g^{
-1}o$ dans~$\Ik(G)$.
L'action de~$H_o$ sur le convexe~$\langle H_o g^{-1}o\rangle$ a au moins un point fixe, d'après~\cite[2.3.1]{Tit79}. Choisissons-en un, disons~$p$. Comme~$p$ est fixe sous~$H_o$, il s'écrit, d'après~\ref{TheoF1}, sous
la forme~$z^{-1}\gamma$ avec~$\gamma$ dans un compact~$C$ et~$z$ dans~$Z_G (H)(\kk)$.
Comme l'action de~$z$ commute à celle de~$H_o$, nous avons~$zH_o g^{
-1}o = H_o zg^{-1}o$. Comme l'action de~$z$ sur~$\Ik(G)$
échange les appartements, et est affine sur chaque appartement, nous avons
\begin{equation*}
\langle H_o zg^{-1}o \rangle=\langle z H_o g^{-1}o \rangle=z\langle H_o g^{-1}o \rangle.
\end{equation*}
Par conséquent~$\langle H_o zg^{-1}o \rangle$ contient le point~$zp = z(z^{-1}\gamma) = \gamma$, qui appartient à~$C$.
Autrement dit~$g z^{-1}$ appartient à~$Y$, ce qu'il fallait démontrer.

Montrons que l'intersection~$Y \cap Z_G (H)(\kk)$ est compacte. Tout d'abord c'est l'intersection de deux fermés, donc c'est un fermé. Comme le stabilisateur~$G_o$ de~$o$ est compact, et que l'action de~$G(\kk)$ sur~$\Ik(G)$ est propre, il suffit de montrer que l'intersection~$(Y^{-1}\cdot o)\cap(Z_G (H)(\kk)\cdot o)$ est relativement compacte. Or~$o$ étant fixe sous~$G_o$, donc sous~$H_o$, l'en\-sem\-ble~$Z_G (H)(\kk) \cdot o$ est contenu dans le lieu fixe~$\Ik(G)^{H_o}$. Mais, par définition même de~$Y$, l'intersection de~$Y^{-1}\cdot o$ avec~$\Ik(G)^{H_o}$ est contenue dans le compact~$C$.
\end{proof}

Ceci étant, il nous reste à démontrer la seconde condition du Théorème~\ref{Theo11}, autrement dit à établir la formule~\eqref{eq2}. Quitte à changer la constante~$c$, la validité
de la formule~\eqref{eq2} ne dépend de la norme~$\Nm{-}$ qu'à équivalence près. Or,~$V$ étant de
dimension finie, toutes les normes (homogènes) sont équivalentes. 
Soit~$B$ une
boule du dual de~$V$. Quitte à appliquer la section C.1, nous pouvons supposer que les hypothèses de la Proposition~\ref{Prop22} concernant la norme~$\Nm{-}$ sont satisfaites.\footnote{\label{pied B}N.d.É.: Cette hypothèse est vraisemblablement superfétatoire, si on considère la boule unité~$B$ du dual après être passé à une extension ultramétrique~$\kk'$ de~$\kk$ à groupe de valuation non discret (dense dans~$\R_>0$). On tâchera de modifier en conséquence la définition de~$c_4$.} 
\begin{proposition}\label{Prop22}
La situation est celle du Théorème~\ref{Theo11}. Nous utilisons les notations
précédentes. En particulier~$\rho$ et~$V$ sont fixés, et nous avons choisi~$\theta$,~$o$ et~$C$ (de sorte, la partie~$Y$ est bien définie).

Supposons que~$\Omega$ soit bornée, et que~$\Nm{-}$ soit~$G_o$-invariante, ultramétrique et\footnote{Voir la note de pied de page précédente.} ne prenne que des valeurs prises par~$\abs{-} : \kk \rightarrow \R$. Alors la formule (2) est satisfaite avec la constante~$c_4/(c_1c_2c_3)$, où
\[
c_1=1+\sup_{f\in C_G(\Ad_\rho)}
	\frac{\pi_\kk(f)}
	{
		\displaystyle{\sup_{\omega\in\Omega}\abs{f}(\omega)}
	},
c_2=\min_{\omega\in\Omega}\NM{\rho(\omega)}^{-1},
c_3=\sup_{f\in C_G(\Ad_\rho)}
	\frac{\abs{f}(\theta(o))}
	{
		\displaystyle{\sup_{k\in K}\abs{f}(k)}
	}
\]
et~$c_4$ sont obtenues en appliquant les Propositions~\ref{Prop32},~\ref{Prop31},~\ref{Prop35}, et~\ref{Prop39} respectivement.
\end{proposition}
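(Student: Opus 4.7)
\emph{Aperçu de la preuve.} Fixons $y \in Y$ et $v \in V$ non nul~; quitte à normaliser, on peut supposer $\Nm{v} \in \abs{\kk^\times}$ grâce à la dernière hypothèse sur $\Nm{-}$. Par définition de $Y$, l'enveloppe convexe de $H_o \cdot y^{-1} \cdot o$ dans $\Ik(G)$ rencontre le compact $C$, et un tel point d'intersection s'écrit comme barycentre d'un nombre fini d'éléments de $H_o y^{-1} o$. L'idée directrice est de transporter cette information convexe dans l'espace de Berkovich $G^\an$ via l'application $\theta$, puis d'invoquer la convexité, le long de $\theta$, des fonctions de la forme $x \mapsto \abs{f(x)}$ pour $f \in \kk[G]$ fournie par la Proposition~\ref{Prop36}.

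On ramène d'abord l'estimation cherchée, qui porte sur des normes de vecteurs, à une estimation sur les coefficients matriciels de $\Ad_\rho$. À cet effet, on fabrique un endomorphisme $e \in \gl(V)$ associé à $v$ (typiquement un projecteur de rang~$1$ le long de $v$, dont l'existence avec normes contrôlées est garantie par l'hypothèse sur les valeurs prises par $\Nm{-}$), permettant de contrôler à la fois $\NM{e}$ et $\Nm{e(v)}/\Nm{v}$. Le passage de $\NM{\Ad_\rho(y\omega)(e)}$ à $\Nm{\rho(y\omega)(v)}$ se paie alors du facteur $c_2$ issu de la Proposition~\ref{Prop31}, qui borne inférieurement $\NM{\rho(\omega)}^{-1}$ uniformément sur la partie bornée $\Omega$.

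Le cœur du raisonnement est ensuite la chaîne suivante. L'hypothèse~\eqref{**'} pour $\Ad_\rho$ fournit un morphisme $H$-équivariant de $\gl(V)$ vers $C_H(\Ad_\rho)$ compatible aux projecteurs de Reynolds $\pi_\z$ et $\pi_\kk$, ce qui permet de travailler dans $C_H(\Ad_\rho)$ au lieu de $\gl(V)$. Trois étapes s'enchaînent alors~: la Proposition~\ref{Prop32} (d'où $c_1$), qui mobilise l'hypothèse~\eqref{*} sur $\Omega$ pour comparer $\pi_\kk(f)$ à $\sup_{\omega \in \Omega}\abs{f}(\omega)$~; la Proposition~\ref{Prop35} (d'où $c_3$), qui, par convexité et compacité du stabilisateur, compare $\abs{f}(\theta(o))$ aux valeurs de $\abs{f}$ sur le compact $K$ considéré~; et enfin la Proposition~\ref{Prop39} (d'où $c_4$), qui exploite pleinement l'information $y \in Y$, autrement dit la présence d'un point de $C$ dans l'enveloppe convexe de $H_o y^{-1} o$, pour transporter l'estimation depuis $\theta(o)$ vers $\theta(y^{-1} o)$ via la convexité de $\abs{f} \circ \theta$ (Proposition~\ref{Prop36}). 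La composition donne, après retour à $\Nm{-}$,
\[
\Nm{v} \leq \frac{c_4}{c_1 c_2 c_3} \sup_{\omega \in \Omega} \Nm{\rho(y\omega)(v)},
\]
soit exactement la formule~\eqref{eq2} recherchée.

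L'obstacle principal est la Proposition~\ref{Prop39}, dont la constante $c_4$ encapsule tout l'apport géométrique~: c'est là que l'information convexe dans l'immeuble $\Ik(G)$, fournie par la définition de $Y$ et par le compact $C$ du Théorème~\ref{TheoF1}, est effectivement convertie en estimation analytique sur $G^\an$ via le plongement $\theta$ du Théorème~\ref{TheoF3}. Les autres ingrédients sont plus modulaires~: compatibilités algébriques de type Reynolds (pour $c_1$), bornes élémentaires sur les normes opérateur de $\rho(\omega)$ (pour $c_2$) et convexité sur un stabilisateur compact (pour $c_3$), ainsi que la fabrication d'un endomorphisme $e$ adapté à $v$, qui sont la partie la plus directement accessible.
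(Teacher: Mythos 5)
Votre stratégie d'ensemble coïncide avec celle du texte (dualité avec la boule $B$, réduction aux coefficients matriciels de $\Ad_\rho$, projecteurs de Reynolds, puis les Propositions~\ref{Prop32}, \ref{Prop35}, \ref{Prop36} et~\ref{Prop39}), mais l'étape de réduction des normes de vecteurs aux coefficients matriciels, telle que vous la décrivez, ne fonctionne pas. Si~$e$ est un projecteur de rang~$1$ le long de~$v$, disons~$e=v\otimes\phi_0$ avec~$\phi_0(v)=1$, alors~$\Ad_\rho(y\omega)(e)=\rho(y\omega)v\otimes(\phi_0\circ\rho(y\omega)^{-1})$, de sorte que passer de~$\NM{\Ad_\rho(y\omega)(e)}$ à~$\Nm{\rho(y\omega)(v)}$ exige de majorer~$\NM{\phi_0\circ\rho(y\omega)^{-1}}$, donc~$\NM{\rho(y^{-1})}$, quantité non bornée lorsque~$y$ parcourt~$Y$. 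Le facteur~$c_2$ de la Proposition~\ref{Prop31} ne contrôle que~$\NM{\rho(\omega)}^{-1}$ pour~$\omega$ dans la partie \emph{bornée}~$\Omega$ et ne peut absorber cette perte. Le texte procède autrement~: les hypothèses «~ultramétrique~» et «~valeurs prises par~$\abs{-}$~» donnent~$\Nm{w}=\sup_{\phi\in B}\abs{\phi(w)}$ (Lemme~\ref{LemmeC1}), et, pour chaque~$\phi$ de la boule unité~$B$ du dual, on considère la fonction scalaire~$\omega\mapsto\phi\left(\rho(\omega^{-1}ky\omega)(v)\right)$, qui est un coefficient matriciel de~$\Ad_\rho$ restreint à~$H$ (forme linéaire~$e'\mapsto\phi(e'(v))$ sur~$\gl(V)$, «~vecteur~»~$\rho(ky)$)~; la conjugaison par~$\omega$ coûte alors exactement le facteur~$c_2$ et aucun inverse de~$\rho(y)$ n'intervient.

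Deux autres points. D'une part, vous n'utilisez nulle part l'invariance de~$\Nm{-}$ sous~$G_o$, pourtant indispensable pour insérer~$\sup_{k\in G_o}$ dans le membre de gauche et relier ensuite, via la Proposition~\ref{Prop35}, les valeurs sur~$G_o$ à la semi-norme~$\theta(o)$~: sans elle, la constante~$c_3$ n'a pas de prise. D'autre part, vous attribuez à la Proposition~\ref{Prop39} le transport convexe le long de l'immeuble~; en réalité c'est la Proposition~\ref{Prop38} (jointe à la convexité de la Proposition~\ref{Prop36} et à la définition de~$Y$) qui ramène la valeur en~$\theta(y^{-1}o)$ à un infimum sur le compact~$C$, tandis que la Proposition~\ref{Prop39} fournit seulement la minoration uniforme~$\sup_{\phi\in B}\abs{\phi}(\pi_\z(\gamma)(v))\geq\Nm{v}/c_4$ pour~$\gamma$ dans~$C$. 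Ces réattributions sont mineures, mais la lacune sur la réduction par projecteur de rang~$1$ est, elle, substantielle et doit être corrigée.
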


\section{Propositions}\label{section3}
Dans cette section nous réunissons quelques arguments généraux qui serviront à la démonstration de la Proposition~\ref{Prop22}. On pourra passer directement à la section suivante et se reporter aux énoncés ci-dessous au besoin. Les notations sont celles introduites dans la section précédente.

\begin{proposition}\label{prop31}\label{Prop31}
Il existe une constante positive inversible~$c_2$ telle que pour tout élément~$g$ de~$G(\kk)$ et tout vecteur~$v$ de~$V$, on ait
\begin{equation}\label{eq3}
\sup_{\omega\in\Omega}\Nm{\rho(g\cdot \omega)(v)}
	\geq
c_2\cdot\sup_{\omega\in\Omega}\Nm{\rho(\omega^{-1}\cdot g\cdot \omega)(v)}.
\end{equation}
\end{proposition}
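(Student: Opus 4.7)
L'idée est exclusivement algébrique et repose sur la multiplicativité de~$\rho$. Pour tout~$\omega\in H(\kk)$, on a l'identité
\[
\rho(g\cdot\omega) \;=\; \rho(\omega)\cdot \rho(\omega^{-1}\cdot g\cdot \omega),
\]
de sorte que, pour tout~$v\in V$,
\[
\rho(g\cdot \omega)(v) \;=\; \rho(\omega)\bigl(\rho(\omega^{-1}\cdot g\cdot \omega)(v)\bigr).
\]
La norme~$\Nm{-}$ étant homogène, on a pour tout~$w\in V$ l'inégalité~$\Nm{w}\leq \NM{\rho(\omega)^{-1}}\cdot\Nm{\rho(\omega)w}$, soit~$\Nm{\rho(\omega)w}\geq \Nm{w}/\NM{\rho(\omega)^{-1}}$. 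Appliquée à~$w=\rho(\omega^{-1}g\omega)(v)$, cela donne la minoration ponctuelle, valable pour tout~$\omega\in\Omega$, tout~$g\in G(\kk)$ et tout~$v\in V$:
\[
\Nm{\rho(g\cdot \omega)(v)} \;\geq\; \NM{\rho(\omega)^{-1}}^{-1}\cdot \Nm{\rho(\omega^{-1}\cdot g\cdot \omega)(v)}.
\]

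La seconde étape consiste à uniformiser la constante~$\NM{\rho(\omega)^{-1}}^{-1}$ en~$\omega$. Comme~$\Omega$ est supposée \emph{bornée} dans~$H(\kk)$ (voir la convention fixée à la fin de la section précédente) et que l'inversion est un~$\kk$-morphisme régulier de~$H$, la partie~$\Omega^{-1}$ est encore bornée, donc son image par~$\rho$ est bornée dans~$\gl(V)$ pour la norme d'opérateur~$\NM{-}$. Par conséquent, le réel
\[
c_2 \;:=\; \Bigl(\sup_{\omega\in\Omega}\NM{\rho(\omega)^{-1}}\Bigr)^{-1}
\]
est strictement positif (et fini, donc inversible). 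Avec cette définition, la minoration ponctuelle ci-dessus se récrit
\[
\Nm{\rho(g\cdot \omega)(v)} \;\geq\; c_2\cdot \Nm{\rho(\omega^{-1}\cdot g\cdot \omega)(v)},\quad \forall\omega\in\Omega.
\]

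Il reste à passer aux bornes supérieures: le membre de gauche étant majoré par~$\sup_{\omega\in\Omega}\Nm{\rho(g\cdot\omega)(v)}$, l'inégalité précédente fournit, pour chaque~$\omega\in\Omega$,
\[
\sup_{\omega'\in\Omega}\Nm{\rho(g\cdot \omega')(v)} \;\geq\; c_2\cdot \Nm{\rho(\omega^{-1}\cdot g\cdot \omega)(v)};
\]
puis, prenant le supremum du membre de droite sur~$\omega\in\Omega$, on obtient la formule~\eqref{eq3}.

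La preuve n'offre aucune difficulté réelle; le seul point à surveiller est la définition exacte de la constante~$c_2$ annoncée dans la Proposition~\ref{Prop22}, qui fait intervenir~$\NM{\rho(\omega)}^{-1}$ plutôt que~$\NM{\rho(\omega)^{-1}}^{-1}$. Les deux quantités sont simultanément strictement positives sous l'hypothèse de bornitude de~$\Omega$, et on peut, au besoin, remplacer~$c_2$ par le plus petit des deux, ce qui ne change rien à la validité des énoncés en aval. L'étape délicate à vérifier est donc bien la minoration uniforme de~$\NM{\rho(\omega)^{-1}}^{-1}$, qui repose uniquement sur la bornitude de~$\Omega$ et la continuité des opérations algébriques de~$H$.
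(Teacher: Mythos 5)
Votre démonstration est correcte et suit essentiellement la même démarche que celle de l'article: minoration ponctuelle par une norme d'opérateur, puis uniformisation de la constante grâce à la bornitude de~$\Omega$, et passage aux bornes supérieures. Vous êtes même plus soigneux que le texte original, dont l'inégalité~\eqref{eq4} est écrite dans le mauvais sens et dont la constante~$c_2=\inf_{\omega\in\Omega}\NM{\rho(\omega)}^{-1}$ devrait, comme vous le relevez, faire intervenir~$\NM{\rho(\omega)^{-1}}$ (ou~$\NM{\rho(\omega^{-1})}$) plutôt que~$\NM{\rho(\omega)}$; votre justification de la bornitude de~$\Omega^{-1}$ comble exactement ce point.
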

\begin{proof} Par définition de la norme d'opérateur~$\NM{\rho(\omega)}$, pour tous~$g$,~$\omega$ et~$v$ comme dans l'énoncé, nous avons l'inégalité
\begin{equation}\label{eq4}
\Nm{\rho(\omega^{-1}\cdot g\cdot\omega)}
\geq
\NM{\rho(\omega)}^{-1}\cdot\Nm{\rho(g\cdot\omega)(v)}.
\end{equation}
Posons~$c_2=\inf_{\omega\in\Omega}\Nm{\rho(\omega)}^{-1}$. Comme~$\Omega$ est une partie bornée et non vide, son image par~$\omega\mapsto\NM{\rho(\omega)}^{-1}$ est une partie bornée et non vide de~$\R_{<0}$. La constante~$c_2$ est donc positive et inversible, et répond à l'énoncé.
\end{proof}
\begin{proposition}\label{prop32}\label{Prop32}
Il existe une constante positive inversible~$c_1$ telle que pour tout coefficient matriciel~$f$ dans~$C_H(\Ad_\rho)$, on ait
\begin{equation}\label{eq5}
\sup_{\omega\in\Omega}\abs{f(\omega)}\geq\frac{1}{c_1}\abs{\pi_\lie{\kk}(f)}.
\end{equation}
\end{proposition}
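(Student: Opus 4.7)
Le plan est de ramener l'inégalité~\eqref{eq5} à l'équivalence des (semi-)normes sur un espace vectoriel de dimension finie sur un corps valué complet. La dimension finie de~$C_H(\Ad_\rho)$, en tant que quotient de~$\gl(V)\tens\gl(V)^\vee$, a déjà été observée dans la discussion qui suit le Théorème~\ref{Theo11}, et~$\pi_\kk$ y est également introduit comme forme~$\kk$-linéaire sur~$C_H(\Ad_\rho)$.

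Je commencerais par introduire sur~$C_H(\Ad_\rho)$ l'application
\[
N_1 : f\longmapsto\sup_{\omega\in\Omega}\abs{f(\omega)}.
\]
Trois vérifications sont à conduire. \emph{Finitude}: comme~$\Omega$ est borné dans~$H(\kk)$ et que~$f$ est une fonction régulière (donc continue et bornée sur tout borné de~$H(\kk)$, via son adhérence compacte), cette borne supérieure est finie. \emph{Homogénéité}~$(\kk,\abs{-})$ et \emph{sous-additivité ultramétrique}: immédiates à partir des propriétés analogues de~$\abs{-}$ sur~$\kk$. \emph{Séparation}: c'est précisément la propriété~\eqref{*} de~$\Omega$, qui assure que~$f|_\Omega\equiv 0$ entraîne~$f\equiv 0$ sur~$H$. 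Ainsi~$N_1$ est bien une norme ultramétrique sur~$C_H(\Ad_\rho)$. En parallèle, l'application~$N_2 : f\mapsto\abs{\pi_\kk(f)}$ est une semi-norme ultramétrique, comme composée de la forme~$\kk$-linéaire~$\pi_\kk$ et de la valeur absolue~$\abs{-}$.

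Il resterait à invoquer l'énoncé classique selon lequel, sur un espace vectoriel de dimension finie sur un corps valué complet, toute semi-norme est dominée par un multiple positif de toute norme (la semi-norme~$N_2$ y est continue, donc bornée sur la boule unité de~$N_1$). Ceci fournit une constante~$c>0$ telle que~$N_2\leq c\cdot N_1$, soit l'inégalité~\eqref{eq5} avec, au choix,~$c_1=c$ ou~$c_1=1+c$ pour s'accorder à la formule donnée dans la Proposition~\ref{Prop22}. Le seul point sensible est la vérification que~$N_1$ est finie et séparante, car c'est là que les deux hypothèses imposées à~$\Omega$ -- bornitude et propriété~\eqref{*} -- se combinent; tout le reste procède d'arguments standard d'algèbre linéaire sur un corps complet.
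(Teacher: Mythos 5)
Votre démonstration est correcte et suit essentiellement la même démarche que celle du texte : on vérifie que~$f\mapsto\sup_{\omega\in\Omega}\abs{f(\omega)}$ est une norme sur l'espace de dimension finie~$C_H(\Ad_\rho)$ (bornitude de~$\Omega$ pour la finitude, propriété~\eqref{*} pour la séparation), puis on invoque la comparaison des (semi-)normes en dimension finie pour borner~$\abs{\pi_\kk(-)}$, avec le même ajustement final~$c_1=1+c$. Rien à redire.
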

\begin{proof} Comme~$\Omega$ est borné, l'application~$\Nm{-}_\Omega:f\mapsto \sup_{\omega\in\Omega}\abs{f}(\omega)$ est bien définie sur~$C_H(\Ad_\rho)$. C'est manifestement une semi-norme. D'eprès la condition~\eqref{*}, elle ne s'annule pas: c'est une norme. Comme~$C_H(\Ad_\rho)$ est de dimension finie, l'application linéaire~$\pi_\kk$, de~$C_H(\Ad_\rho)$ sur~$\kk$, est un opérateur borné, relativement à~$\Nm{-}_\Omega$ et~$\abs{-}$. Si~$\NM{\pi_\kk}$ désigne sa norme en tant qu’opérateur~$(C_H(\Ad_\rho),\Nm{-}_\Omega)\to (\kk,\abs{-})$, alors
\begin{equation}\label{eq6}
\NM{\pi_\kk} \cdot \sup_{\omega\in\Omega}\abs{f(\omega)} \geq \abs{\pi_\kk(f)}.
\end{equation}
Par conséquent~$c_1=\NM{\pi_\kk}$ convient si~$\NM{\pi_\kk}\neq0$. Si\footnote{En définitive le cas~$\pi_\kk=0$ ne se produit pas si~$V$ est non nulle. Voir~p.\,\pageref{constantes coeffs} pourquoi les constantes non nulles sont coefficients matriciels dans~$C_H(\Ad_\rho)$.}, en revanche~$\NM{\pi_\kk}=0$, alors~$c_1$ convient. Quoiqu'il en soit,~$c_1=1+\NM{\pi_\kk}$ convient toujours.
\end{proof}
\begin{proposition}\label{prop33}\label{Prop33} Pour tout vecteur~$v$ de~$V$, pour toute forme linéaire~$\phi$ dans~$V^\vee$ et tout élément~$g$ de~$G(\kk)$,
\begin{equation}\label{eq7}
\omega\mapsto(\rho(\omega^{-1}\cdot y\cdot \omega)(v)|\phi)
\end{equation}
définit une fonction sur~$H$ (resp.~$G$) appartenant à~$C_H (\Ad_\rho )$ (resp.~$C_G (\Ad_\rho )$).
\end{proposition}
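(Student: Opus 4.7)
Le plan est de reconnaître la fonction $\omega\mapsto (\rho(\omega^{-1}\cdot g\cdot\omega)(v)\,|\,\phi)$ comme un coefficient matriciel de la représentation adjointe~$\Ad_\rho$, par un passage à la cyclicité de la trace. Il y a, pour l'essentiel, un unique calcul formel à mener; je m'attends à ce qu'il n'y ait pas d'obstacle sérieux, mais il faudra veiller à bien manipuler la convention~\eqref{eq1} qui définit~$\Ad_\rho$.

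Je commencerais par introduire l'endomorphisme de rang un~$e_{v,\phi}\in\gl(V)$ défini par~$x\mapsto (x|\phi)\cdot v$, pour lequel on vérifie immédiatement l'identité $(A(v)|\phi)=\Tr(A\circ e_{v,\phi})$, valable pour tout~$A\in\gl(V)$. On notera que~$e_{v,\phi}$ dépend linéairement de~$v\tens\phi\in V\tens V^\vee$.

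J'appliquerais ensuite cette identité à~$A=\rho(\omega^{-1}\cdot g\cdot\omega)=\rho(\omega)^{-1}\rho(g)\rho(\omega)$, puis la cyclicité de la trace pour obtenir:
\begin{equation*}
(\rho(\omega^{-1}\cdot g\cdot\omega)(v)\,|\,\phi)
=\Tr\bigl(\rho(g)\circ\rho(\omega)\,e_{v,\phi}\,\rho(\omega)^{-1}\bigr)
=\Tr\bigl(\rho(g)\circ\Ad_\rho(\omega)(e_{v,\phi})\bigr),
\end{equation*}
la dernière égalité résultant directement de~\eqref{eq1}. C'est le point où il importe de ne pas confondre~$\Ad_\rho(\omega)$ avec~$\Ad_\rho(\omega^{-1})$: la présence de~$\omega^{-1}$ \emph{à gauche} de~$g$ dans l'argument donne bien, \emph{après} cyclicité de la trace, un~$\Ad_\rho(\omega)$, en accord avec la convention adoptée.

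Il suffira enfin de remarquer que la forme linéaire~$\psi_g:\gl(V)\to\kk$ définie par~$e\mapsto\Tr(\rho(g)\circ e)$ est dans le dual algébrique de~$\gl(V)$. Notre expression se lit alors~$\omega\mapsto \psi_g\bigl(\Ad_\rho(\omega)(e_{v,\phi})\bigr)$, qui est par définition un coefficient matriciel de la représentation~$\Ad_\rho$ de~$G$, donc élément de~$C_G(\Ad_\rho)$. La restriction à~$H$ est alors tautologiquement dans~$C_H(\Ad_\rho)$, ce qui conclut.
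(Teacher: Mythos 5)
Votre preuve est correcte et suit pour l'essentiel la même voie que celle de l'article : il s'agit dans les deux cas de reconnaître~\eqref{eq7} comme un coefficient matriciel de~$\Ad_\rho$, l'article se contentant d'invoquer la forme linéaire~$e\mapsto(e(v)|\phi)$ sur~$\gl(V)$ pairée avec l'orbite de~$\rho(g)$. Votre détour par l'opérateur de rang un~$e_{v,\phi}$ et la cyclicité de la trace rend simplement explicite l'auto-dualité de~$\gl(V)$ (via la forme trace) qui permet de transformer le~$\Ad_\rho(\omega^{-1})$ appliqué à~$\rho(g)$ en un~$\Ad_\rho(\omega)$ appliqué à~$e_{v,\phi}$, point que la démonstration originale laisse implicite.
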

\begin{proof} Comme~$e\mapsto(e(v)|\phi)$ est une forme linéaire sur~$\gl(V)$, l'application~\eqref{eq7} est un coefficient matriciel de l’action~$\Ad_\rho$.
\end{proof}
\begin{proposition}\label{propr34}\label{Prop34}
Nous utilisons les hypothèses du Théorème~\ref{Theo11} concernant la représentation~$\rho$. Pour toute forme~$\kk$-linéaire~$\phi$ dans~$V^\vee$, la fonction constante sur~$H$
\begin{equation}\label{eq8}
\pi_\kk\left(\omega\mapsto\left(\rho\left(\omega^{-1}\cdot y\cdot\omega^{\vphantom{l}}\right)^{\vphantom{l}}(v)\middle|\phi\right)\right)
\end{equation}
vaut
\begin{equation}\label{eq9}
\left(\pi_{\z}\left(\rho(y)^{\vphantom{l}}\right)^{\vphantom{i}}(v)\middle| \phi \right)
\end{equation}
et, lorsque~$y$ varie dans~$G$, définit une fonction régulière appartenant à~$C_G(\Ad_\rho)$ et invariante sous l'action par conjugaison de~$H$ sur~$G$.
\end{proposition}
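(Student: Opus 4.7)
The plan is to establish the stated identity first via an $H$-equivariant reformulation, and then to verify regularity, inclusion in $C_G(\Ad_\rho)$, and $H$-conjugation invariance as three separate points.

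First I would package everything as a single $\kk$-linear map
\[
\Phi : \gl(V) \longrightarrow C_H(\Ad_\rho), \qquad \Phi(e)(\omega) = \left(\Ad_\rho(\omega^{-1})(e)(v) \,\big|\, \phi\right).
\]
Proposition~\ref{Prop33} ensures that $\Phi$ takes values in $C_H(\Ad_\rho)$. Using $\Ad_\rho(\omega^{-1}h) = \Ad_\rho(\omega^{-1})\Ad_\rho(h)$, one checks $\Phi(\Ad_\rho(h)(e))(\omega) = \Phi(e)(h^{-1}\omega)$, so $\Phi$ is $H$-equivariant when $C_H(\Ad_\rho)$ carries the left-translation action. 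The hypotheses of Theorem~\ref{Theo11} guarantee that both $\gl(V)$ and $C_H(\Ad_\rho)$ satisfy~\eqref{**} and that $\gl(V)$ satisfies~\eqref{**'}, so Remark~\emph{i)} (p.~\pageref{Remarquei}) yields $\pi_\kk \circ \Phi = \Phi \circ \pi_\z$. For $e \in \z$ one has $\Ad_\rho(\omega^{-1})(e) = e$, so $\Phi(e)$ is the constant function with value $(e(v)\mid\phi)$; specializing at $e = \rho(y)$ yields $\pi_\kk(\Phi(\rho(y))) = (\pi_\z(\rho(y))(v)\mid\phi)$, which is precisely the claimed identity.

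Regularity in $y$ is immediate: $y \mapsto \rho(y)$ is an algebraic morphism $G \to \gl(V)$, $\pi_\z$ is $\kk$-linear, and the final pairing with $v$ and $\phi$ is linear. For the inclusion in $C_G(\Ad_\rho)$, I would leverage Proposition~\ref{Prop33} at the level of $G$: for each fixed $y$, the function $\omega \mapsto (\rho(\omega^{-1}y\omega)(v)\mid\phi)$ already lies in $C_G(\Ad_\rho)$; applying $\pi_\kk$ extracts its $H$-invariant, hence constant, component, and constants are themselves matrix coefficients of $\Ad_\rho$ (p.~\pageref{constantes coeffs}), so the resulting $y$-dependent function remains in $C_G(\Ad_\rho)$.

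Finally, for $H$-conjugation invariance, equivariance of $\pi_\z$ yields
\[
\pi_\z(\rho(hyh^{-1})) = \pi_\z(\Ad_\rho(h)(\rho(y))) = \Ad_\rho(h)(\pi_\z(\rho(y))),
\]
and since $\pi_\z(\rho(y)) \in \z$ is by definition centralized by $\rho(H)$, the right-hand side equals $\pi_\z(\rho(y))$; pairing with $v$ and $\phi$ then gives invariance. The main obstacle will be the careful matching of $H$-actions on the source and target of $\Phi$ needed to apply Remark~\emph{i)} --- in particular the identification, already performed in Section~\ref{section2}, of the Reynolds projector $\pi_\kk$ on $C_H(\Ad_\rho)$ with the projection onto the constants.
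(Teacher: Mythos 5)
Your treatment of the two substantive claims --- the equality of \eqref{eq8} and \eqref{eq9}, and the invariance under conjugation by $H$ --- is exactly the paper's argument: the same map $\Phi\colon\gl(V)\to C_H(\Ad_\rho)$, $e\mapsto\bigl(\omega\mapsto(\Ad_\rho(\omega^{-1})(e)(v)\mid\phi)\bigr)$, the same appeal to Remark~\emph{i)} through the hypotheses \eqref{**} and \eqref{**'} to get $\pi_\kk\circ\Phi=\Phi\circ\pi_\z$, and the same computation $\pi_\z(\rho(\omega y\omega^{-1}))=\Ad_\rho(\omega)(\pi_\z(\rho(y)))=\pi_\z(\rho(y))$ for the invariance. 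That part is correct, including your (justified) worry about matching the $H$-module structures on source and target.

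The step that does not work is your justification of the clause «~appartenant à $C_G(\Ad_\rho)$~». Your argument concerns the wrong variable: for fixed $y$ the function of $\omega$ lies in $C_H(\Ad_\rho)$ and its Reynolds projection is a \emph{constant function of $\omega$}; that this constant is itself a matrix coefficient (of $\omega$) says nothing about where the function $y\mapsto(\pi_\z(\rho(y))(v)\mid\phi)$ sits inside $\kk[G]$. In fact this function equals $\ell(\rho(y))$ for the fixed linear form $\ell=(\,\cdot\,(v)\mid\phi)\circ\pi_\z$ on $\gl(V)$, so in the variable $y$ it is a matrix coefficient of $\rho$, not of $\Ad_\rho$. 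For $G=\SL(2)$, $H$ the diagonal torus and $\rho$ the standard representation, with $v=e_1$, $\phi=e_1^\vee$, one gets $y\mapsto y_{11}$, which is a coefficient of the standard representation and hence not in the span of the coefficients of $\Ad_\rho$ (coefficients of non-isomorphic irreducible representations are linearly independent in $\kk[G]$). The paper's own proof is silent on this clause --- it only asserts regularity and proves invariance --- and what the sequel (Proposition~\ref{Prop35}, via the constant $c_3$) actually requires is membership in \emph{some} fixed finite-dimensional, translation-stable subspace of $\kk[G]$, which the coefficient space of $\rho$ does provide. So the correct move is to replace your argument by the observation that the function is a matrix coefficient of $\rho$, rather than to try to force it into $C_G(\Ad_\rho)$.
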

\begin{proof}
La fonction~$y\mapsto\left(\pi_{\z}(\rho(y))(v)\middle|\pi\right)$ est manifestement régulière, et est invariante pour l'action par conjugaison de~$H$ sur~$G$, vu que, pour~$\omega$ dans~$H$,
\begin{equation}\label{eq10}
	\left(\pi_{\z}(\rho(\omega y \omega^{-1})\right)
		=
	\rho(\omega)\pi_{\z}(\rho(y))\rho(\omega^{-1})
		=
	\pi_{\z}(\rho(y))
\end{equation}
car~$\pi_\z$ est~$H$-équivariant et d'image dans~$\z$.

Pour établir l'égalité de~\eqref{eq8} et~\eqref{eq9}, considérons l'application~$\Phi:\gl(V)\to C_H(\Ad_\rho)$  qui envoie~$e$ vers le coefficient matriciel~$\omega\mapsto\left(\rho(\omega)e(\rho(y))\rho(\omega^{-1})\middle|\phi\right)$.
C'est une application~$H$-équivariante, et, d'après la Remarque~\emph{i)} en page~\pageref{Remarquei}, elle commute aux projecteurs~$\pi_\z$ et~$\pi_\kk$. Autrement dit
\[
\pi_\kk(\omega\mapsto(\rho(\omega)e\rho(\omega^{-1})(v)|\phi)=\omega\mapsto(\rho(\omega)\pi_\z(e)\rho(\omega^{-1})(v)|\phi).
\]
Prenons~$e=\rho(y)$. Alors le membre de gauche s'identifie à~\eqref{eq8}, et, d'après~\eqref{eq10}, le membre de droite s'identifie à~\eqref{eq9}.
\end{proof}
\begin{proposition}\label{prop35}\label{Prop35}
Pour tout point~$o$ de~$\Ik(G)$, l'application~$f\mapsto \sup_{k\in G_o}\abs{f}(k)$ et la restriction de la semi-norme~$\theta(o)$ définissent, sur~$C_G(\Ad_\rho)$, deux normes comparables.

En particulier, il existe une constante positive et inversible~$c_3$ telle que, pour toute fonction~$f$ dans~$C_G(\Ad_\rho)$, on a~$\sup_{k\in G_o}\abs{f}(k)\geq\frac{1}{c_3}\abs{f}(\theta(o))$.
\end{proposition}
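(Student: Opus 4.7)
The plan is to reduce to the classical fact that any two norms on a finite-dimensional vector space over a complete non-archimedean valued field are equivalent. Since~$C_G(\Ad_\rho)$ is finite-dimensional, being a quotient of~$\gl(V)\tens\gl(V)^\vee$, it suffices to verify that each of the two maps defines a genuine norm (and not merely a seminorm) on~$C_G(\Ad_\rho)$; the existence of~$c_3$ is then one direction of the resulting comparability.

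For~$f\mapsto\sup_{k\in G_o}\abs{f}(k)$, finiteness and the seminorm axioms are immediate since~$G_o$ is compact and~$f$ is continuous. It is in fact a norm, because~$G_o$ is Zariski dense in~$G$ (\cite[Lemma~1.4]{RTW09}): any regular function vanishing on~$G_o$ vanishes identically, \emph{a fortiori} on~$C_G(\Ad_\rho)$.

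For~$f\mapsto\abs{f}(\theta(o))$, the seminorm axioms come from the very definition of~$G^\an$ as the set of bounded multiplicative seminorms on~$\kk[G]$. The kernel of~$\theta(o)$ is then a prime ideal~$\lie{p}\subset\kk[G]$; to see that $\lie{p}\cap C_G(\Ad_\rho)=(0)$, I would invoke the explicit description of~$\theta$ furnished by Theorem~\ref{TheoF3}, from which $\theta(o)$ is known to be a multiplicative norm. Intrinsically one can argue as follows: since~$G_o\subset G_\theta$ stabilises~$\theta$ and fixes~$o$, the seminorm~$\theta(o)$ is preserved by the induced action of~$G_o$ on~$\kk[G]$, so~$\lie{p}$ is~$G_o$-stable; by Zariski density of~$G_o$ in~$G$, the ideal~$\lie{p}$ is then $G$-stable; and since $G$ acts transitively on itself by right translation, the only such prime ideal is~$(0)$.

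The main potential obstacle is this kernel-triviality of~$\theta(o)$, which rests on the precise equivariance properties of~$\theta$ provided by Theorem~\ref{TheoF3} and~\cite{RTW09}. Once both maps are known to be norms on the finite-dimensional space~$C_G(\Ad_\rho)$, a standard argument produces a positive invertible constant~$c_3$ such that $\sup_{k\in G_o}\abs{f}(k)\geq\abs{f}(\theta(o))/c_3$ for every $f\in C_G(\Ad_\rho)$, which is the desired conclusion.
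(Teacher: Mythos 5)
Your proposal is correct and follows essentially the same route as the paper: both norms are checked to be genuine (non-degenerate) norms on the finite-dimensional space~$C_G(\Ad_\rho)$ — the first via Zariski density of~$G_o$, the second by showing the multiplicative seminorm~$\theta(o)$ has trivial kernel through $G_o$-invariance, Zariski density, and reducedness of~$G$ — and one then invokes equivalence of (homogeneous) norms in finite dimension. Your ``intrinsic'' argument for the kernel is precisely the paper's; only your first suggested route (reading norm-ness of~$\theta(o)$ directly off the explicit description in Théorème~\ref{TheoF3}) is shaky, since formula~\eqref{eq27} only gives a norm on~$\kk[T]$, not obviously on~$\kk[G]$, but you do not rely on it.
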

\begin{proof}
L’existence de~$c_3$ découle de la définition de la comparabilité des nor\-mes de la section~\ref{sectionC1}. D’après cette section, il suffit de vérifier que l’on a
bien deux normes~$(k,\abs{-})$-homogènes.

L’application~$f \mapsto \sup_{k\in G_o}\abs{f}(k)$ est bien définie car~$G_o$ est compact. C’est manifestement
une semi-norme~$(\kk,\abs{-})$-homogène. Comme~$G_o$ est ouvert, donc Zariski dense dans~$G$,
cette semi-norme ne s’annule en aucune fonction régulière. C’est donc une norme.

Quant à~$\theta(o)$, comme il s’agit par définition d’une semi-norme (non nulle,
\cite[1.1]{Ber90}) multiplicative~$(\kk,\abs{-})$-homogène sur~$\kk[G]$, il suffit de vérifier que
c’est en fait une norme. Comme~$\theta(o)$ est multiplicative, son noyau définit un idéal
de~$\kk[G]$. Comme~$\theta(o)$ vaut~$1$ en~$1$, c’est idéal est strict. Or le stabilisateur de~$p$
dans~$G(\kk)$ est Zariski dense. Cet idéal définit une sous-variété~$G(\kk)$-invariante
de~$G$: cette sous-variété ou bien est vide ou bien vaut~$G$ lui-même. Or~$G$ est réduit,
et l’idéal considéré ne contient pas l’unité. Par conséquent cet idéal est nul:~$\theta(p)$
ne s’annule pas sur~$\kk[G]$ et \emph{a fortiori} sur~$C_G(\Ad_\rho)$.
\end{proof}
L'énoncé suivant est un corollaire à l’énoncé~\ref{TheoF3}. La notion de convexité est précisée dans la section~\ref{sectionF} correspondante, à laquelle nous renvoyons. Mentionnons juste que cette notion de convexité est naturellement induite par la structure affine par morceaux standard sur l'immeuble~$\Ik(G)$.
\begin{proposition}[Convexité]\label{prop36}\label{Prop36}
Pour toute fonction régulière~$f$ sur~$G$,
l'application~$p \mapsto \abs{f}(\theta(p))$ est convexe sur $\Ik (G)$.
\end{proposition}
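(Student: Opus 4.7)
La démarche consiste à se ramener à la convexité sur un appartement de~$\Ik(G)$, puis à y utiliser la description explicite, fournie par l'énoncé~\ref{TheoF3}, du plongement~$\theta$. D'après la définition de la convexité de la section~\ref{sectionF}, et puisque deux points de l'immeuble sont toujours contenus dans un appartement commun, il suffit de vérifier que pour tout tore déployé maximal~$T$ de~$G$, d'appartement~$\Lambda=Y(T)\tens\R$, la restriction de~$p\mapsto\abs{f}(\theta(p))$ à~$\Lambda$ est convexe comme fonction sur un espace affine.

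Sur un tel appartement, l'énoncé~\ref{TheoF3} encode la construction de Berkovich~\cite[chapitre~5]{Ber90}, étendue par~\cite{RTW09} au cas non déployé, et donne pour~$\theta(\lambda)$ une seminorme de type «~Gauss~»~: décomposons~$f\in\kk[G]$ suivant l'action rationnelle de~$T$ en une somme finie~$f=\sum_{\chi}f_\chi$, où~$\chi$ parcourt les caractères de~$T$~; alors on obtient une expression de la forme
\begin{equation*}
\abs{f}(\theta(\lambda))=\max_{\chi}\, a_\chi\cdot\exp\bigl(-\langle\chi,\lambda\rangle\bigr),
\end{equation*}
où~$(a_\chi)_\chi$ est une famille finie de réels positifs ne dépendant pas de~$\lambda$. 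Chaque terme du supremum est, comme fonction de~$\lambda$, l'exponentielle d'une forme affine, donc une fonction convexe~; un maximum fini de fonctions convexes étant convexe, la conclusion s'ensuit.

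L'obstacle principal se loge précisément dans la justification, à partir de l'énoncé~\ref{TheoF3} de l'annexe, de cette description de type «~Gauss~» du plongement~$\theta$ en restriction à un appartement. Elle reflète que, dans la construction de Berkovich--Rémy--Thuillier--Werner, l'action du sous-tore déployé par translation correspond, au niveau de l'immeuble, à la translation affine sur l'appartement associé, ce qui transforme la co-action torique en re-pondération exponentielle des composantes de poids. Une fois cette description acquise, la preuve se réduit à l'observation élémentaire qu'un maximum fini d'exponentielles de formes affines est une fonction convexe.
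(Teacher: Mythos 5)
Votre proposition est correcte et suit pour l'essentiel la même voie que l'article : la démonstration donnée ici se réduit à renvoyer au Corollaire~\ref{coroF4} du Théorème~\ref{TheoF3}, dont le contenu est précisément votre argument — sur chaque appartement, $\theta$ coïncide avec une application de la forme $(\Phi^\an\circ\theta_T)\cdot p$, de sorte que $\abs{f}\circ\theta$ y est un maximum fini (ou une limite simple de tels maxima) d'exponentielles de formes affines en~$\lambda$, donc une fonction convexe. Le seul point que vous laissez implicite, et que l'annexe traite via la Proposition~\ref{PropD1} et la section~\ref{sectionD42}, est que pour un point de base non rationnel~$p$ de~$G^\an$ la description de type «~Gauss~» s'obtient comme limite simple de telles expressions lorsque~$g$ tend vers~$p$, ce qui préserve la convexité.
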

Il s'agit du Corollaire~\ref{coroF4} au Théorème~\ref{TheoF3}.
\begin{proposition}[Hyperbolicité]\label{prop37}\label{Prop37}
Pour tout point~$p$ de~$\Ik (G)$, l'enveloppe
convexe de~$H_o \cdot p$ contient un point fixe de~$H_o$.
\end{proposition}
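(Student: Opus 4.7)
Le plan est de se ramener au théorème du point fixe de Bruhat-Tits (\cite[2.3.1]{Tit79}), déjà invoqué dans la démonstration de la Proposition~\ref{Prop21}: tout groupe compact d'isométries d'un espace CAT(0) complet, ayant une orbite bornée, y admet un point fixe. C'est précisément l'énoncé d'hyperbolicité que le texte évoque pour justifier le choix d'un cadre métrique complet (Berkovich plutôt que rigide).

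D'abord, je remarquerais que~$H_o = G_o\cap H(\kk)$ est compact, en tant que sous-groupe fermé du compact~$G_o$; donc l'orbite~$H_o\cdot p$ est compacte, et \emph{a fortiori} bornée pour la métrique CAT(0) de~$\Ik(G)$. Ensuite, j'établirais que l'enveloppe convexe~$\langle H_o\cdot p\rangle$ est~$H_o$-invariante: l'action de~$H_o$ sur~$\Ik(G)$ s'opère par isométries qui échangent les appartements en étant affines sur chacun d'eux (argument déjà utilisé dans la démonstration de la Proposition~\ref{Prop21}), donc préserve la convexité, d'où~$h\cdot\langle H_o\cdot p\rangle = \langle hH_o\cdot p\rangle = \langle H_o\cdot p\rangle$ pour tout~$h\in H_o$. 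Comme les boules fermées d'un espace CAT(0) sont convexes, l'enveloppe convexe d'un borné reste bornée.

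Il ne resterait alors qu'à appliquer~\cite[2.3.1]{Tit79} à l'action du groupe compact~$H_o$ sur la fermeture~$\overline{\langle H_o\cdot p\rangle}$, laquelle est un espace CAT(0) complet, borné et~$H_o$-invariant, pour en déduire l'existence d'un point fixe. Le seul point délicat — plutôt cosmétique — serait de s'assurer que le point fixe ainsi obtenu appartient bien à l'enveloppe convexe et pas seulement à sa fermeture; cela résulte de ce que la démonstration de Tits le produit comme un circumcentre, qui tombe dans l'enveloppe convexe fermée, convention implicitement adoptée ici et compatible avec l'usage fait de ce même résultat dans la démonstration de la Proposition~\ref{Prop21}.
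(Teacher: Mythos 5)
Your proof is correct and follows essentially the same route as the paper's: both reduce to the Bruhat--Tits fixed point theorem \cite[2.3.1]{Tit79} applied to the compact group~$H_o$ acting on the convex hull of the compact orbit~$H_o\cdot p$. The paper sidesteps your final worry (hull versus its closure) by observing directly that this convex hull is already compact, hence closed, because~$\Ik(G)$ est localement réunion finie d'appartements, and by citing~\cite[3.2.3]{BT72} alongside~\cite[2.3.1]{Tit79}.
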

\begin{proof} Notons que comme~$\Ik(G)$ est localement réunion finie d'appartements, et que~$H_o\cdot p$ est compact (C'est l'image du groupe compact~$H_o$ par une application continue vers un espace séparé), l'enveloppe convexe de~$H_o \cdot p$ est compacte. Il suffit alors d'appliquer~\cite[2.3.1]{Tit79} et~\cite[3.2.3]{BT72}.
\end{proof}
\begin{proposition}\label{Prop38}Soient~$Y$ et~$C$ comme en page~\pageref{Defi Y}.
Soit~$f$~une fonction convexe sur~$\Ik (G)$ et~$H_o$-invariante à gauche sur~$Y \cdot o$, et un point~$p$ appartenant à~$Y \cdot o$ . Alors
\begin{equation}\label{eq11}
f(p)\geq\inf_{\gamma\in C }f(\gamma).
\end{equation}
\end{proposition}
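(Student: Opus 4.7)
Le plan consiste à lire en transparence la définition de~$Y$ (page~\pageref{Defi Y}), puis à transférer par convexité et invariance sous~$H_o$ le contrôle de~$f$ depuis~$C$ vers~$p$. Concrètement, j'écris~$p=y^{-1}\cdot o$ avec~$y\in Y$ (c'est la convention qui s'impose naturellement depuis la preuve de la Proposition~\ref{Prop21}: un~$y\in Y$ étant caractérisé par le fait que l'enveloppe convexe de~$H_o\cdot y^{-1}\cdot o$ rencontre~$C$, le point attaché à~$y$ est~$y^{-1}\cdot o$; l'énoncé~$p\in Y\cdot o$ gagnerait vraisemblablement à se lire~$p\in Y^{-1}\cdot o$). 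Par définition même de~$Y$, l'enveloppe convexe~$\langle H_o\cdot p\rangle=\langle H_o\cdot y^{-1}\cdot o\rangle$ contient alors un point~$\gamma\in C$.

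Comme~$o$ est fixé par~$H_o$, la partie en question est stable sous l'action à gauche de~$H_o$, donc~$H_o\cdot p$ y est inclus; l'invariance supposée de~$f$ sous~$H_o$ sur cet ensemble entraîne que~$f$ est constante, de valeur~$f(p)$, sur l'orbite~$H_o\cdot p$. On termine par la convexité de~$f$ sur~$\Ik(G)$: sur l'enveloppe convexe~$\langle H_o\cdot p\rangle$, compacte d'après~\cite[2.3.1]{Tit79} et~\cite[3.2.3]{BT72}, la fonction~$f$ est majorée par son supremum sur l'orbite compacte~$H_o\cdot p$, à savoir~$f(p)$. On en déduit~$f(\gamma)\leq f(p)$, puis, en prenant l'infimum sur les~$\gamma\in C$ ainsi obtenus, l'inégalité~\eqref{eq11}.

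Le seul point méritant attention, à vrai dire purement de convention, concerne l'arbitrage~$Y\cdot o$ \emph{versus}~$Y^{-1}\cdot o$ dans la formulation; tout le reste est formel, et repose sur le principe standard qu'une fonction convexe sur~$\Ik(G)$ est majorée sur l'enveloppe convexe d'un compact par son supremum sur ce compact, combiné à la description explicite de~$Y$.
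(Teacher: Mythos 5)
Votre démonstration est correcte et suit essentiellement la même voie que celle du texte: invariance sous~$H_o$ pour remplacer~$f(p)$ par le supremum sur l'orbite, convexité pour étendre ce supremum à l'enveloppe convexe, puis définition de~$Y$ pour y trouver un point de~$C$. Votre remarque sur la convention~$Y\cdot o$ \emph{versus}~$Y^{-1}\cdot o$ est pertinente et conforme à l'usage qui est fait de la proposition dans la section~\ref{section4}.
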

\begin{proof}Comme~$f$ est~$H_o$-invariante sur~$Y\cdot o$, on a~$f(p)=\sup_{x\in H_o\cdot p} f(x)$. Notons~$\langle H_o\cdot p\rangle$ l'enveloppe convexe de~$H_o\cdot p$. Comme~$f$ est convexe,~$\sup_{x\in H_o\cdot p} f(x)=\sup_{x\in \langle H_o\cdot p\rangle} f(x)$. Lorsque~$p$ appartient à~$Y\cdot o$, l'intersection~$C\cap\langle H_o\cdot p\rangle$ est non vide. D'où
\[
f(p)=\sup_{x\in\langle H_o\cdot p\rangle} f(x)\geq \sup_{x\in \langle H_o\cdot p\rangle\cap C} f(x)\geq\inf_{x\in \langle H_o\cdot p\rangle\cap C} f(x)\geq \inf_{\gamma\in C} f(\gamma).
\]
\end{proof}
Dans la proposition suivante,~$C$ désigne le compact de~$\Ik(G)$ défini dans la section~\ref{section2} précédente, et~$B$ la boule unité du dual de~$V$ (cf. la note de l'éd. au pied de la page~\pageref{pied B}).

Dans cette proposition, on étudie les coefficients matriciels qui sont de la forme~$g\mapsto\phi(\pi_\z(\rho(g))(v)$ comme fonction sur~$G^\an$, et en particulier sur l'image de~$C$ dans~$G^\an$ par l'application~$\theta:\Ik(G)\to G^\an$. On notera donc
\[
\abs{\phi}(\pi_\z(\rho(\theta(\gamma)))(v))
\]
a valeur obtenue en appliquant, pour~$\gamma$ dans~$C$ la norme~$\theta(\gamma)$ au coefficient matriciel~$g\mapsto\phi(\pi_\z(\rho(g))(v))$. Pour alléger les notations, on pourra omettre~$\theta$ et~$\rho$ dans les notations, soit
\[
\abs{\phi}(\pi_\z(\gamma)(v))
=
\abs{\phi}(\pi_\z(\rho(\theta(\gamma)))(v)).
\]
\begin{proposition}\label{Prop39}
Il existe une constante~$c_4$ telle que pour tout~$\gamma$ de~$C$, et tout vecteur~$v$ dans~$V$, on a
\begin{equation}\label{eq12}
\sup_{\phi\in B}\pi_{\z}(\rho(\theta(\gamma)))(v)\geq\Nm{v}/c_4.
\end{equation}
\end{proposition}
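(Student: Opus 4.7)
\emph{Esquisse.} Pour $\gamma\in C$ et $v\in V$, posons
\[
N_\gamma(v) = \sup_{\phi\in B} \theta(\gamma)(f_{v,\phi}),
\]
où $f_{v,\phi}(g) = \phi(\pi_\z(\rho(g))(v))$ est une fonction régulière de $C_G(\Ad_\rho)$ d'après la Proposition~\ref{Prop34}. Il s'agit de minorer $N_\gamma(v)$ par $\Nm{v}/c_4$, uniformément en $\gamma\in C$ et $v\in V$. La stratégie est un argument de compacité, passant par une base finie de $V^\vee$ pour ramener le sup à un maximum fini.

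Fixons une base $\phi_1,\ldots,\phi_n$ de $V^\vee$ contenue dans $B$, et posons
\[
M_\gamma(v) = \max_{1\leq i\leq n} \theta(\gamma)(f_{v,\phi_i}).
\]
Comme les $\phi_i$ appartiennent à $B$, on a $M_\gamma(v)\leq N_\gamma(v)$, et il suffit de minorer $M_\gamma$. Les fonctions $(\gamma,v)\mapsto \theta(\gamma)(f_{v,\phi_i})$ sont continues en $(\gamma,v)$~: continues en $\gamma$ à $v$ fixé par définition de la topologie (de convergence simple) sur $G^\an$, continues en $v$ à $\gamma$ fixé comme semi-normes ultramétriques en $v$, la continuité jointe résultant de ce que les semi-normes $\theta(\gamma)$ sont uniformément bornées en $\gamma\in C$ sur tout borné de $\kk[G]$. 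Leur maximum $M_\gamma(v)$ est donc également continu.

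Vérifions que $M_\gamma(v)>0$ pour $v\neq 0$. Comme $\Id_V\in\z$, on a $\pi_\z(\rho(1_G))=\Id_V$, d'où $f_{v,\phi_i}(1_G)=\phi_i(v)$. Puisque les $\phi_i$ engendrent $V^\vee$, il existe $i_0$ avec $\phi_{i_0}(v)\neq 0$; $f_{v,\phi_{i_0}}$ est alors non nulle dans $\kk[G]$. Or $\theta(\gamma)$ est en fait une norme sur $\kk[G]$~: le stabilisateur parahorique de $\gamma$, intersecté avec $G_\theta$, est un sous-groupe ouvert de $G(\kk)$, donc Zariski-dense dans $G$, et il stabilise $\theta(\gamma)$; son action par translation sur $\kk[G]$ préserve le noyau de $\theta(\gamma)$, idéal premier propre, dont le lieu d'annulation dans $G$ est donc $G$-invariant, partant vide ou égal à $G$, ce qui force l'idéal à être nul. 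Ceci étend à tout $\gamma\in\Ik(G)$ l'argument de la démonstration de la Proposition~\ref{Prop35}, qui s'y restreint au point $o$. On a donc $\theta(\gamma)(f_{v,\phi_{i_0}})>0$ et $M_\gamma(v)>0$.

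L'hypothèse que $\Nm{-}$ ne prend que des valeurs prises par $\abs{-}$ garantit que la sphère $S=\{v\in V:\Nm{v}=1\}$ est non vide; fermée dans la boule unité du $\kk$-espace vectoriel localement compact $V$, elle est compacte. La fonction continue strictement positive $M_\gamma(v)$ admet donc un minimum strictement positif sur le compact $C\times S$, dont on note $c_4$ l'inverse. Par $\kk$-homogénéité simultanée de $M_\gamma$ et $\Nm{-}$, on conclut $N_\gamma(v)\geq M_\gamma(v)\geq \Nm{v}/c_4$ pour tous $\gamma\in C$ et $v\in V$. L'étape la plus délicate me paraît être la non-dégénérescence de $\theta(\gamma)$ pour $\gamma\in C$ quelconque, où l'on étend au-delà du point $o$ l'argument de Zariski-densité des stabilisateurs de points de l'immeuble dans $G(\kk)$.
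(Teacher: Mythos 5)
Votre démonstration est correcte et suit pour l'essentiel la même stratégie que celle de l'article : les deux ingrédients clefs sont identiques (le fait que~$\theta(\gamma)$ soit une norme sur~$\kk[G]$, via la densité de Zariski du stabilisateur de~$\gamma$, et l'évaluation en l'élément neutre donnant~$\pi_\z(\rho(1_G))(v)=v$), combinés à un argument de compacité. Vous organisez seulement ce dernier différemment : un minimum d'une fonction continue strictement positive sur le compact~$C\times S$, obtenu en remplaçant le sup sur~$B$ par un maximum fini sur une base, là où l'article réduit d'abord à un seul~$\gamma$ par équivalence uniforme des normes~$\theta(\gamma)$ restreintes à~$C_G(\Ad_\rho)$, puis raisonne par l'absurde avec une suite de vecteurs.
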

\begin{proof} Remarquons tout d'abord que, d'après la Proposition~\ref{prop35}, la semi-nor\-me~$\theta(\gamma)$ est une norme, pour tout~$\gamma$ dans~$C$.

La restriction de la norme~$\theta(\gamma)$ sur~$\kk[G]$ à~$C_G(\Ad_\rho)$ dépend continûment de~$\gamma$, pour la
topologie faible. Comme~$C_G(\Ad_\rho)$ est de dimension finie et~$C$ est compact, ces nor\-mes sont « uniformément équivalentes » pour~$\gamma$ dans~$C$:  il existe une constante~$c'$ telle que pour tous~$\gamma$ et~$\gamma'$ dans~$C$ nous ayons
\[\forall f\in C_G(\Ad_\rho),\abs{f}(\theta(\gamma))\geq c'\abs{f}(\theta(\gamma')).\] Il suffira de vérifier la formule~\eqref{eq12} pour une constante~$c_4'$ et \emph{pour un seul~$\gamma$ de~$C$}: la constante~$c_4=c'c_4'$ conviendra alors pour tout~$\gamma$ dans~$C$.

Fixons ~$\gamma$ dans~$C$. La formule est évidente pour~$v = 0$. Nous pouvons donc supposer que~$v \neq 0$, et même, par homogénéité, que, pour une certaine constante~$c''$ ne dépendant que de~$\Nm{-}$, le vecteur~$v$ appartienne au compact~$V_{c''}$ où l'inégalité~$1/c'' \leq \Nm{v} \leq c''$ est satisfaite.

Tout revient ainsi à montrer que
\[
\inf_{v\in V_{c''}}\sup_{\phi\in B}(\pi_\z(\gamma)(v))>0.
\]
Soit, par l'absurde, un suite~$(v_n)_{n\in \Z_{\geq0}}$ de~$V_{c''}$ telle que
\[
\lim_{n\in \Z_{\geq0}}\sup_{\phi\in B}(\pi_\z(\gamma)(v_n))>0.
\]
Comme~$V_{c''}$ est compact, et n'adhère pas à~$0$, la suite~$v_n$ a une valeur d'adhérence non nulle~$v_\infty$. Nous allons montrer, ce qui sera une contradiction, que~$v_\infty$ est nécessairement nul.

Pour tout~$\phi_0$ dans~$B$, on conclut de l'encadrement
\[
0\leq\abs{\phi_0}\left(\pi_\z(\gamma)(v_n)\right)\leq\sup_{\phi\in B}\abs{\phi}\left(\pi_\z(\gamma)(v_n)\right)\to 0
\]
que~$\lim_{n\in \Z_{\geq0}}\abs{\phi_0}\left(\pi_\z(\gamma)(v_n)\right)=0$. Par continuité de~$v\mapsto\abs{\phi_0}\left(\pi_\z(\gamma)(v)\right)$, il s'ensuit que l’on a~$\abs{\phi_0}(\pi_\z(\gamma)(v_\infty)=0$.

Ainsi, pour tout~$\phi$ de~$B$, on a~$\abs{\phi}(\pi_\z(\gamma)(v_\infty))=0$. Autrement dit, comme~$\theta(\gamma)$ est une
norme, chaque coefficient matriciel~$g\mapsto\pi(\pi_\z(\rho(g))(v_\infty)$ 
est identiquement nul. Par conséquent, pour tout~$g$ dans~$G(\kk)$, 
le vecteur~$\pi_\z (\rho(g))(v_\infty)$ est nul. Mais,
lorsque~$g$ vaut l'élément neutre,
\[
\pi_\z(\rho(g))(v_\infty)=\pi_\z(\Id_V)(v_\infty)=\Id_V(v_\infty)=v_\infty\neq0.
\]

\end{proof}
\section{Démonstration}\label{section4}
Démontrons la Proposition~\ref{Prop22}. Nous utilisons les notations de la section~\ref{section2}, et
les arguments de la section~\ref{section3}.
\begin{proof}
Comme la norme~$\Nm{-}$ est supposée~$G_o$-invariante, nous pouvons sub\-sti\-tuer~$\sup_{k\in G_o}\Nm{\rho(k\cdot y\cdot \omega)}$ à~$\rho(y\cdot \omega)$ dans la formule~\eqref{eq2}, ce qui donne
\begin{equation}\label{eq13}
\forall y\in Y, \forall v\in V, \sup_{\omega\in\Omega}\sup_{k\in G_o}\Nm{\rho(k\cdot y\cdot \omega)}\geq \Nm{v}/c.
\end{equation}
D'après la Proposition~\ref{Prop31}, il suffit d'établir
\begin{equation}\label{eq14}
\forall y\in Y, \forall v\in V, \sup_{\omega\in\Omega}\sup_{k\in G_o}\Nm{\rho(\omega^{-1}\cdot k\cdot y\cdot \omega)}\geq\frac{1}{c\cdot c_2} \Nm{v}.
\end{equation}
Soit~$V^\vee$ le dual algébrique de~$V$, et notons~$B$ sa boule unité.
D'après les hypothèses sur~$\Nm{-}$ 
nous avons~$\Nm{v}=\sup_{\phi\in B}\abs{\phi(v)}$. La formule qui précède équivaut donc à la suivante.
\begin{equation}\label{eq15}
\forall y\in Y, \forall v\in V, \sup_{\omega\in\Omega}\sup_{\phi\in B}\sup_{k\in G_o}\abs{\phi}\left(\rho(\omega^{-1}\cdot k\cdot y\cdot \omega)\right)\geq\frac{1}{c\cdot c_2} \Nm{v}.
\end{equation}
D'après la Proposition~\ref{Prop33}, la fonction~$\omega\mapsto {\phi}\left(\rho(\omega^{-1}\cdot k\cdot y\cdot \omega)\right)$ appartient à~$C_H(\Ad_\rho)$. Appliquant la Proposition~\ref{Prop32}, il sort
\begin{equation}\label{eq16}
 \sup_{\omega\in\Omega}\abs{\phi}\left(\rho(\omega^{-1}\cdot k\cdot y\cdot \omega)\right)
 	\geq
\frac{1}{c_1}
	\abs{
		\pi_{\kk}
		\left(
			\omega
				\mapsto
			{\phi}\left(\rho(\omega^{-1}\cdot k\cdot y\cdot \omega)\right)
		\right)
		}.
\end{equation}
D'après la Proposition~\ref{Prop34}, le membre de droite de~\eqref{eq16} vaut
\begin{equation}\label{eq17}
\frac{1}{c_1}\abs{\phi}(\pi_{\z}(k\cdot y)(v)).
\end{equation}
D'après la Proposition~\ref{prop35}, il existe une constante positive et inversible~$c_3$ telle que
\begin{equation}\label{eq18}
\sup_{k\in G_o}\abs{\phi}(\pi_{\z}(k\cdot y)(v))\geq\frac{1}{c_3}\abs{\phi}(\pi_{\z}(\theta(o)\cdot y)(v)).
\end{equation}
Par conséquent, nous avons établi, combinant~\eqref{eq16}, \eqref{eq17} et~\eqref{eq18},
\begin{equation*}
\forall y\in Y, \forall v\in V, 
\sup_{\omega\in\Omega}\sup_{k\in G_o}\abs{\phi}\left(\rho(\omega^{-1}\cdot k\cdot y\cdot \omega)\right)
\geq
\frac{1}{c_1 c_3}\sup_{\phi\in B}\abs{\phi}(\pi_{\z}(\theta(o)y)(v)),
\end{equation*}
d'où, considérant la borne supérieure relative aux~$\phi$ dans~$B$,
\begin{equation}\label{eq19}
\forall y\in Y, \forall v\in V, 
\sup_{\omega\in\Omega}\sup_{\phi\in B}\sup_{k\in G_o}\abs{\phi}\left(\rho(\omega^{-1}\cdot k\cdot y\cdot \omega)\right)
\geq
\frac{1}{c_1 c_3}\sup_{\phi\in B}\abs{\phi}(\pi_{\z}(\theta(o)y)(v)).
\end{equation}
Ainsi, pour démontrer~\eqref{eq15}, il suffit d'établir
\begin{equation}\label{eq20}
\frac{1}{c_1 c_3}\sup_{\phi\in B}\abs{\phi}(\pi_{\z}(\theta(o)y)(v))
\geq
\Nm{v}\frac{1}{c\cdot c_2}.
\end{equation}
D'après la Proposition~\ref{Prop34}, la fonction~$g\mapsto \pi(\pi_{\z}(g)(v))$ est régulière sur~$G$ en la variable~$g$, invariante sous l'action de~$H$ par conjugaison. Or, pour~$h$ dans~$H_o$, et~$y^{-1}\cdot o$ dans~$Y^{-1}\cdot o$, nous avons
\begin{equation*}
h\theta(y^{-1}o)h^{-1} = h\theta(y^{-1}o)
\end{equation*}
car~$G_\theta$ contient~$h^{-1}$, et
\begin{equation*}
h\theta(y^{-1}o) = \theta(h y^{-1}o)
\end{equation*}
car~$\theta$ est équivariante. Sur~$Y^{-1}\cdot o$, la fonction~$y^{-1}o\mapsto\abs{\phi}(\pi_{\z}(\theta(y^{-1}o))(v))$ est donc invariante à gauche sous~$H_o$.

D'après la Proposition~\ref{Prop36}, la fonction~$p\mapsto\abs{\phi}(\pi_{\z}(\theta(p))(v))$ est convexe sur~$\Ik(G)$. Par conséquent la fonction~$g\mapsto \sup_{\phi\in B}\abs{\phi}(\pi_{\z}(g)(v))$
est convexe sur~$\Ik(G)$, et sa restriction à~$Y^{-1}\cdot o$ est invariante à gauche sous~$H_o$. D'après la Proposition~\ref{Prop38}, on a, pour tout~$y$ de~$Y$, 
\begin{equation*}
\sup_{\phi\in B} \abs{\phi}(\pi_{\z}(y^{-1}o)(v))
\geq
\inf_{\gamma\in C}
\sup_{\phi\in B}\abs{\phi}(\pi_{\z}(\gamma)(v)).
\end{equation*}
Or, d'après la Proposition~\ref{Prop39}, nous avons
\begin{equation*}
\forall v\in V, \forall\gamma\in C, 
\sup_{\phi\in B}\abs{\phi}(\pi_{\z}(\gamma)(v))\geq \Nm{v}/c_4.
\end{equation*}
Ce qui démontre bien la formule~\eqref{eq20}, avec la constante~$c=
\frac{c_1c_2c_3}{c_4}$.
\end{proof}

\appendix
\renewcommand{\appendixpagename}{Annexe}
\appendixpage
Dans cette annexe, nous faisons quelques rappels sur  les espaces analytiques et les immeubles. Nous y démontrons notamment (Thé\-o\-rè\-me~\ref{TheoF1})
un résultat de décomposition sur les immeubles et (Théorème~\ref{TheoF3}) un résultat sur la convexité logarithmique des fonctions régulières sur l'immeuble, une fois plongé dans l'espace analytique.

\section{Normes ultramétriques}\label{sectionC}\label{sectionC1}
Soit~$V$ un espace vectoriel sur un corps ultramétrique~$\kk$ de valeur absolue notée~$\abs{-}$. La to\-pologie de~$V$, topologie produit relative à une base de~$V$, est intrinsèque: les automorphismes de changement de base de~$\kk^{\dim(V)}$ sont des homéomorphismes. Nous appelons~\emph{norme} sur~$V$ une application~$V\to\R$ telle que l'application~$(x,y)\mapsto \Nm{x-y}$ définisse une distance compatible à la topologie. Cette norme est dite~\emph{$(\kk,\abs{-})$-homogène}, ou simplement~\emph{homogène}, lorsque toute homothétie de facteur~$\lambda$ agit sur les distances d'un facteur~$\abs{\lambda}$:
\addtocounter{equation}{1}
\begin{equation}\label{eq22}
\text{
pour tout~$\lambda$ dans~$\kk$ et~$v$ dans~$V$, nous avons~$\Nm{\lambda\cdot v}=\abs{\lambda}\cdot\Nm{v}$.}
\end{equation}
On dit que la norme~$\Nm{-}$ est \emph{ultramétrique} si
\begin{equation}\label{eq23}
\forall x,y\in V, \Nm{x+y}\leq\max\left\{\Nm{x\vphantom{y}};\Nm{y}\right\}.
\end{equation}

Nous dirons que deux normes~$\Nm{-}$ et~$\Nm{-}'$ sont \emph{comparables}, ou~\emph{équivalentes}, s'il existe une constante positive et inversible~$C$ telle que pour tout~$v$ dans~$V$ , nous
ay\-ons~$\Nm{v}\leq C\Nm{v}'$ et~$\Nm{v}'\leq C\Nm{v}$.

Remarquons que~$\abs{-}$ et~$\abs{-}^2$ sont deux normes sur~$\kk$ qui ne sont pas comparables, sauf si~$\kk$ est discret. \emph{A contrario} deux normes homogènes sur~$V$ sont toujours comparables. Nous ne l'appliquerons qu'à des corps~$\kk$ localement compacts, auquel cas c'est immédiat. Voir~\cite{TheseV} pour le cas général.

\emph{Dorénavant les normes seront supposées homogènes.}

Étant donné une norme sur~$V$, son dual algébrique acquiert une norme duale: à toute forme~$\kk$-linéaire sur~$V$ on associe  sa  norme
 en tant qu'application linéaire de~$V$ vers~$\kk$. Autrement dit on pose
$\Nm{\phi}=\NM{\phi}=\sup_{\Nm{v}\leq 1}\Nm{\phi(v)}$.

\begin{lemme}\label{LemmeC1} Soit~$V$ un espace vectoriel normé de dimension finie non nulle sur~$\kk$, et doit~$B$ la boule unité de son dual. Alors on a l'inégalité
\[
\forall v\in V, \Nm{v}\geq\sup\abs{\phi}(v),
\]
et il n'y a égalité, simultanément pour tout~$v$ de~$V$, que si et seulement si~$\Nm{-}$ est ultramétrique et si les valeurs, dans~$\R$, prises par~$\Nm{-}$ sont celles prises par~$\abs{-}$.
\end{lemme}
\begin{proof}
L'inégalité résulte de ce que pour tout~$\phi$ dans~$B$, nous avons~$\Nm{\phi}\leq 1$ et de ce que, par définition de la norme triple,~$\Nm{\phi(v)}\leq\NM{\phi}\cdot\Nm{v}$.

Le sens direct de l'équivalence découle de~\cite[II §1, Prop. 4 (p. 26)]{Wei74}.

Dans le sens réciproque, on vérifie directement que le membre de droite est une norme ultramétrique et ne prend que des valeurs prises par~$\abs{-}$.
\end{proof}

\section{Espaces analytiques d'après Berkovich}\label{sectionD}
Le but de cette section est tout d'abord de rappeler la construction des espaces analytiques de Berkovich, et de l'espace analytifié d'une variété algébrique. La référence exhaustive standard est~\cite{Ber90}.
L'autre but est d'étendre la propriété de convexité du polygone de Newton au fonctions analytiques sur restreinte à l'appartement d'un tore~$T$, une fois plongé dans l'espace analytique de~$T$ ou d'un groupe algébrique~$G$ contenant~$T$.
\subsection{Semi-normes multiplicatives homogènes}\label{sectionD1}
Soit~$\kk$ un corps local muni
d'une valeur absolue ultramétrique~$\abs{-}$. Sur une~$\kk$-algèbre commutative unifère~$A$, on appelle \emph{semi-norme multiplicative} une application\emph{non constante}~$\Nm{-} : A \xrightarrow{} \R_{\geq 0}$ 
qui soit multiplicative, c.-à-d. telle que
\begin{equation}\label{eq24}
\forall f,g\in A,~\Nm{fg}=\Nm{f}\cdot\Nm{g}
\end{equation}
et vérifiant l'inégalité triangulaire
\begin{equation}\label{eq25}
\forall f,g\in A,~\Nm{f+g}\leq\Nm{f}+\Nm{g}.
\end{equation}
Une telle application envoie l'unité~$1_A$ sur~$1$ et l'élément nul~$0_A$ sur~$0$. Elle est dite~\emph{$(k,\abs{-})$-homogène}, ou simplement \emph{homogène} lorsque
\begin{equation}\label{eq26}
\forall\lambda\in\kk, \forall f\in A,~\Nm{\lambda\cdot f}\leq \abs{f}\cdot\Nm{f}.
\end{equation}
Par multiplicativité, il revient au même d'imposer que~$\Nm{-}$ étende~$\abs{-}$, au sens où l’on a~$\Nm{\lambda\cdot 1_A} = \abs{\lambda}$.

\subsection{Analytification}\label{sectionD2}
Soit~$V$ une variété algébrique affine sur~$\kk$. Suivant
V. Berkovich, \cite[1.5.1]{Ber90}, nous appellerons \emph{espace analytique associé à}~$V$ l'espace topologique~$V^\an$ formé de l'ensemble des semi-normes multiplicatives
homogènes sur l'algèbre~$\kk[V ]$ des fonctions régulières sur~$V$, pour la topologie de la convergence simple.
Ce sont aussi les espaces topologiques sous-jacents à certains espaces analytiques que V. Berkovich définit en~\cite[3.1]{Ber90} (cf.~\cite[3.4.2]{Ber90}). Pour toute fonction régulière~$f$ sur~$V$, nous notons~$\abs{f}$ a fonction réelle~$x\mapsto x(f)$ sur~$V^\an$.  Par définition, la topologie sur~$V^\an$ est la plus grossière pour laquelle, pour toute fonction régulière~$f$, la fonction~$\abs{f}:V^\an\rightarrow \R_{\geq0}$ est continue.

La construction de l'espace analytique associé est fonctorielle, et covariante,
de la catégorie~$\Aff_\kk$ des variétés affines sur~$\kk$ dans celle des espaces topologiques. En effet tout
morphisme~$A \to A'$ d'algèbres permet, par composition, de produire une seminorme multiplicative~$A \rightarrow \R_{\geq0}$ à partir d'une semi-norme multiplicative homogène sur~$A' \rightarrow \R_{\geq0}$. Cette opération est bien sûr compatible à la convergence simple.
Pour tout morphisme~$\Phi:V\rightarrow V'$ entre variétés algébriques, nous notons~$\Phi^\an$ l'application continue correspondante~$V^\an\rightarrow V'^\an$.

\subsection{Des tores déployés analytiques \ldots}\label{sectionD3}

Soit~$T$ un tore déployé sur~$\kk$ et notons~$X(T)=\Hom(T,\GL(1))$ son groupe des caractères. On identifie l'algèbre~$\kk[T ]$ des fonctions régulières sur~$T$ à l'algèbre de groupe~$\kk[X(T )]$. Tout
caractère~$\chi: T \rightarrow\GL(1)$, définit, par composition une application additive
\[
Y(T)=\Hom(\GL(1),T)\xrightarrow{}\Hom(\GL(1),\GL(1))\simeq\Z.
\]
Par linéarité, chaque caractère définit une forme linéaire sur l'espace vectoriel réel~$\Lambda=Y(T)\tens\R$ que l'on notera~$\lambda\mapsto\langle\chi,\lambda\rangle$. Pour tout~$\lambda$ dans~$\Lambda$, l'application qui envoie une fonction régulière~$f=\sum_{\chi\in X(T)}a_\chi\cdot\chi$ dans~$\kk[T]$ sur
\begin{equation}\label{eq27}
\max_{\chi\in X(T)}\abs{a_\chi}\cdot\abs{\varpi}^{\langle\chi,\lambda\rangle},
\end{equation}
(où~$\abs{\varpi}$ est la valeur absolue d'une uniformisante~$\varpi$ de~$\kk$) définit clairement une norme homogène sur~$\kk[T]$; cette norme est multiplicative d'après~\cite[2.1, p.~21]{Ber90}.  La formule~\eqref{eq27} induit donc une application, que nous noterons~$\lambda \mapsto \theta_T (\lambda)$, de~$\Lambda$ dans~$T^\an$.

Lorsque l'on fixe~$f=\sum_{\chi\in X(T)}a_\chi\cdot\chi$ dans~$\kk[T]$, et 
que l'on fait varier le paramètre~$\lambda$, le logarithme
\[
\log\left(\abs{f}(\theta_T(\lambda))\right)
=\max_{\chi\in X(T)}
\left(
\log\abs{a_\chi}+\log\abs{\varpi}\cdot\langle\chi,\lambda\rangle
\right)
\]
est le maximum d'un nombre fini de fonctions affines en~$\lambda$. C'est donc une fonction convexe sur~$\Lambda$, et c'est en particulier une fonction continue. Pour toute fonction régulière~$f$ sur~$T$, la composition de~$\abs{f}$ avec~$\theta_T$, qui est logarithmiquement
convexe, est continue. Donc l'application~$\theta_T$ est continue par définition de la topologie sur~$T^\an$.

\subsubsection{}\label{sectionD31}
Le groupe~$T (\kk)$ agit sur~$T$ par translation. Il agit par transport de structure sur~$\kk[T ]$ puis sur~$T^\an$. Concrètement, un élément~$\mu$ de~$T (\kk)$ agit sur~$\kk[T ]$ en envoyant la fonction régulière
\[
x\mapsto\sum_{\chi\in X(T)} a_\chi\cdot \chi(x)
\]
sur la fonction régulière
\[
x\mapsto
\sum_{\chi\in X(T)} a_\chi\cdot \chi(x\mu^{-1})
=
\sum_{\chi\in X(T)} a_\chi\cdot \chi(\mu^{-1})\chi(x).
\]
Soit~$\lambda(\mu)$ l'élément de~$\Lambda$ tel que l'on ait
\[
\langle\chi,\lambda(\mu)\rangle=\log_{\abs{\varpi}}(\chi(\mu))
\]
pour tout~$\chi$ de~$X(T)$. Alors l'action de~$\mu$ envoie la norme~$\theta_T (\lambda)$ sur~$\theta_T (\lambda-\lambda(\mu))$.

\subsection{\ldots aux groupes algébriques affines.}\label{sectionD4}

Soit~$\Phi: T \to G$ un morphisme de
variétés algébriques affines du tore T dans un groupe algébrique affine G. Composant~$\theta_T : \Lambda \rightarrow T^\an$ par $\Phi^\an: T^\an \rightarrow G^\an$, nous obtenons une application continue
de~$\Lambda$ dans~$G^\an$. En outre, pour toute fonction régulière~$f$ sur~$G$, la fonction
réelle~$\abs{f}\circ\Phi^\an \circ\theta_T$ sur~$\Lambda$ s'identifie à~$\abs{f \circ\Phi}\circ\theta_T$, et est par conséquent logarithmiquement
convexe.

Notons que l'action à droite du groupe~$G(\kk)$ sur la variété~$G$ induit par une fonctorialité une action de~$G(\kk)$ sur~$G^\an$. En particulier nous pouvons définir, pour tout~$g$ dans~$G(\kk)$, l'application $(\Phi^\an \circ \theta_T )\cdot g$ translatée de~$\Phi^\an \circ\theta_T$ par~$g$.

\emph{Nous notons~$\kk_s$ une extension algébrique séparablement close de~$\kk$.}
\subsubsection{}\label{sectionD41}
Nous allons généraliser cette construction aux éléments de~$G(\kk_s)$.
Pour toute extension séparable finie~$\kk'$ de~$\kk$, on a un tore déployé~$T_{\kk'}=T\tens_\kk\kk'$ sur~$\kk'$, d'algèbre~$\kk'[T_{\kk'}]=\kk[T ] \tens_\kk\kk'$ isomorphe à~$\kk'[X(T)]$. Par restriction des normes de~$\kk'[T_{\kk'}]$ à~$\kk[T]$ on construit une application~${T_{\kk'}}^\an\to T^\an$. La formule~\eqref{eq27} s'étend à~$\kk[T_{\kk'}]$ et définit encore une norme~\emph{multiplicative} (\cite[2.1 (p.~21)]{Ber90}) homogène.
Nous obtenons ainsi une application~$\theta_{T_{\kk'}}:\Lambda\to {T_{\kk'}}^\an$ dont la composée avec l'application~${T_{\kk'}}^\an\to T^\an$ redonne l'application~$\theta_T$. De surcroît cette extension est compatibles aux morphismes d'extensions~$\kk\to\kk'\to\kk''$.

Nous en tirons une conséquence. Si~$g$ est un élément dans~$G(\kk')$, faisons agir~$g$ à droite sur le foncteur des points de~$G$ restreint à la catégorie des~$\kk'$-agèbres: pour une~$\kk'$-algèbre~$A$, l'élément~$g$ agit par translation à droite sur le groupe~$G(A)$, \emph{via} son image par~$G(\kk')\to G(A)$. Il correspond une action, disons~$a_g$, de~$g$ sur~$\kk'[G]$. D'où, par composition, un morphisme~$\kk[G]\to\kk'[G]\xrightarrow{a_g}\kk'[G]\xrightarrow{\Psi_{\kk'}}\kk'[T]$, où~$\Psi_{\kk'}$ est le morphisme de~$\kk'$-algèbres déduit de~$\Phi$. Ainsi, pour chaque~$\lambda$ dans~$\Lambda$, de la norme multiplicative homogène correspondante~$\kk[T]\to\R_{\geq0}$, par~\eqref{eq27}, on déduit, par composition, une norme multiplicative homogène~$\kk[G]\to\R_{\geq0}$.
Nous la noterons~$(\Phi^\an\circ\theta_T)g$ l'application de~$\Lambda$ dans~$G^\an$ ainsi obtenue.

Cette construction est manifestement compatible aux extensions: si~$\kk'$ est une extension finie de~$\kk'$ et~$g''$ l'image de~$g$ dans~$G(\kk'')$, alors~$(\Phi^\an\circ\theta_T)g=(\Phi^\an\circ\theta_T)g''$. Elle ne dépend donc que de l'image de~$g$ dans~$G(\kk_s)$, peu importe le morphisme~$\kk'\to\kk_s$. Autrement dit cette construction ne dépend que de l'image de~$g$ par~$G(\kk_s)\to G^\an$.

Pour~$f$ dans~$\kk'[T]$, la formule~\eqref{eq27} définit encore une fonction convexe de~$\lambda$. Il en résulte que pour~$f$ dans~$\kk[G]$ et~$g$ dans~$G(\kk')$, la composée~$\abs{f}\circ((\Phi\circ\theta_T)g)$ est une fonction convexe sur~$\Lambda$. Ainsi l'application~$(\Phi\circ\theta_T)g$ est continue.

\subsubsection{}\label{sectionD42} Étendons maintenant cette construction aux éléments~$p$ de~$G^\an$.

Soit un point~$p$ de~$G^\an$; ce point est dans l'adhérence de l'image de~$G(\kk_s)$ dans~$G^\an$. Nous allons alors construire l'application~$(\Phi^\an\circ\theta_T )\cdot p$ comme limite simple de fonctions
de la forme~$(\Phi^\an\circ\theta_T )\cdot g$, avec~$g$ dans~$G(\kk_s)$, lorsque l'image de~$g$ dans~$G^\an$ tend vers~$p$.

Montrons que cette limite simple existe et est unique.

\begin{proof}
Notons~$G_{\kk_s}$ le groupe groupe algébrique affine sur~$\kk_s$ d'algèbre~$\kk[G]\tens\kk_s$, notons~$T_{\kk_s}$ son tore sur~$\kk_s$ déployé d'algèbre~$\kk[T]\tens\kk_s$, et~$\Phi_{\kk_s}$ le morphisme~$G_{\kk_s}\to T_{\kk_s}$ issu de~$\Phi$. Soit~$\lambda$ dans~$\Lambda$, soit~$f$ dans~$\kk[G]$ et, pour tout~$g$ dans~$G(\kk_s)$, notons~$f\circ(\Phi_{\kk_s}\cdot g)=\sum_{\chi\in X(T)}a_\chi(g)\chi$ la fonction de~$\kk_s[X(T)]$ qui s'obtient en translatant~$\Phi_{\kk_s}:G_{\kk_s}\to T_{\kk_s}$ par~$g$ puis en composant par~$f$. Tout revient à montrer que lorsque l'image
de~$g$ converge dans~$G^\an$, le nombre réel
\[
\max_{\chi\in X(T)}\abs{a_\chi(g)}\cdot\abs{\varpi}^{\langle\chi,\lambda\rangle}
\]
tend vers une valeur limite.

Par définition,~$\abs{h(g )}$ tend vers une valeur limite pour toute fonction régulière~$h$ sur~$G$ définie sur~$\kk$. Il suffit donc de montrer que sauf pour un nombre fini de caractères~$\chi$, les fonctions 
$g\mapsto a_\chi(g)$ sont nulles et que, pour tout caractère~$\chi$ de~$T$, la formule~$g\mapsto a_\chi(g)$ définit une fonction régulière. Cela résulte de ce que l'action de~$G(\kk_s)$ sur~$\kk_s[G]$ est union de sous-espaces stables sous~$\mathrm{Aut}(\kk_s/\kk)$ de dimension finie, et que l'action de~$G(\kk_s)$ sur un tel sous-espce provient d'une représentation de~$G$ définie sur~$\kk$, \cite[1. \S1 1.9]{Bor91}.
\end{proof}

Comme une limite simple de fonctions convexes est convexe, pour tout~$f$
dans~$\kk[G]$, la composée~$\abs{f}\circ((\Phi^\an\circ\theta_T )\cdot p)$ est une fonction convexe sur~$\Lambda$. En particulier c'est une fonction continue. Par conséquent~$(\Phi^\an\circ\theta_T )\cdot p$ est continue

\subsection{Un Critère}\label{sectionD5} Pour toute extension finie~$\kk'$ de~$\kk$, notons~$G_{\kk'}$ la variété algébrique affine sur~$\kk'$ associée à l'algèbre de type fini~$\kk[G]\tens\kk'$, et~${G_{\kk'}}^\an$ l'espace analytique correspondant. Notons que~$G(\kk')$ est naturellement un \emph{groupe} algébrique affine sur~$\kk'$: son foncteur de points s'écrit~$G_{\kk'}(A)=G(A)$ pour une~$\kk'$-algèbre~$A$.

L'application~$\kk[G]\to\kk[G]\tens\kk'$ induit, par restriction des semi-normes, une application continue~${G_{\kk'}}^\an\to G^\an$. Ces applications sont manifestement compatibles aux morphismes d'extensions.

Notons que le morphisme~$\Phi:T\to G$ induit un morphisme~$\Phi_{\kk'}:T_{\kk'}\to G_{\kk'}$, d'où une application continue~${\Phi_{\kk'}}^\an:{T_{\kk'}}^\an\to {G_{\kk'}}^\an$. Le relèvement~$\Lambda\to{T_{\kk'  }}^\an$ de~$\theta_T:\Lambda\to T^\an$, obtenu en étendant la formule~\eqref{eq27}, est lui aussi compatibles aux extensions.
\begin{proposition} \label{propD1}\label{PropD1}
Soit~$\theta:\Lambda\to G$ une application continue. Supposons que pour toute extension finie~$\kk'$ de~$\kk$, il existe~$\theta_{\kk'}:\Lambda\to {G_{\kk'}}^\an$ telle que
\begin{enumerate}[label=\alph*.]
\item \label{D11} $\theta$ soit la composée de~$\theta_{\kk'}$ avec l'application~${G_{\kk'}}^\an\to G^\an$ ci-dessus;
\item \label{D12}	pour tout~$\mu$ dans~$T(\kk')$, l'action de~$T(\kk')$ par translation à gauche sur la fonction~$\theta_{\kk'}$ commute correspond à son action sur~$\Lambda$:
\begin{equation}
\forall\mu\in T(\kk'),~\theta_{\kk'}(x)\cdot\Phi(\mu)=\theta_{\kk'}(x-\lambda(\mu))
\end{equation}
\end{enumerate}
Alors~$\theta$ est l'application~$(\Phi^\an\circ\theta_T)\cdot p$ où~$p$ est le point~$\theta(0)$. 
\end{proposition}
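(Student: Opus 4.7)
On établit un principe d'unicité pour les applications satisfaisant~\ref{D11}~et~\ref{D12}: une telle application est déterminée par sa valeur en $0$. Ce principe s'applique simultanément à $\theta$ et à la construction canonique $(\Phi^\an\circ\theta_T)\cdot p$, ces deux applications vérifiant les hypothèses et prenant la même valeur $p$ en $0$; d'où l'égalité.

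\emph{Pas~1, exploitation de l'équivariance.} En fixant $x=0$ dans~\ref{D12}, on obtient, pour toute extension finie $\kk'/\kk$ et tout $\mu\in T(\kk')$,
\[
\theta_{\kk'}(-\lambda(\mu)) \;=\; \theta_{\kk'}(0)\cdot\Phi(\mu).
\]
Projetant vers $G^\an$ via~\ref{D11}, ceci exprime $\theta(-\lambda(\mu))$ en fonction de $p=\theta(0)$ et de $\mu$ seuls; ce résultat est indépendant du relèvement auxiliaire choisi, l'action à droite de $\Phi(\mu)\in G(\kk')$ sur $G^\an$ se factorisant canoniquement par la projection $G_{\kk'}^\an\to G^\an$.

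\emph{Pas~2, densité et unicité.} L'ensemble $D := \{-\lambda(\mu) : \mu \in T(\kk_s)\}$ est dense dans $\Lambda$: une fois fixée une base $(v_i)$ du groupe $Y(T)$, tout vecteur $\sum r_iv_i$ se laisse approximer par des $\lambda(\mu)$ en choisissant, pour chaque $i$, un élément de $\kk_s^\times$ de valuation proche de $-r_i\log\abs{\varpi}$; ceci est loisible car $\log\abs{\kk_s^\times} = \mathbf{Q}\cdot\log\abs{\varpi}$ est dense dans $\R$. Par continuité de $\theta$, ses valeurs sur $D$ la déterminent sur $\Lambda$ tout entier, d'où l'unicité annoncée: deux applications continues vérifiant~\ref{D11}~et~\ref{D12} qui coïncident en $0$ coïncident partout.

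\emph{Pas~3, vérification pour la construction canonique.} L'application $(\Phi^\an\circ\theta_T)\cdot p$ construite aux paragraphes~\ref{sectionD41}--\ref{sectionD42} vérifie~\ref{D11} de manière immédiate, les relèvements $\theta_{\kk'}$ provenant de la même construction effectuée au niveau de $\kk'$ via $\theta_{T_{\kk'}}$. Elle vérifie~\ref{D12} grâce à l'équivariance $\theta_T(\lambda)\cdot\mu = \theta_T(\lambda-\lambda(\mu))$ du paragraphe~\ref{sectionD31}, transmise par $\Phi^\an$ et préservée par le passage à la limite de~\ref{sectionD42}. Sa valeur en $0$ est $p$ par la construction même par limite simple; appliquant le principe d'unicité du pas~2, on conclut. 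L'obstacle principal sera la densité de $D$, qui repose sur la ramification arbitrairement grande disponible dans $\kk_s$, et la bonne gestion des compatibilités entre les actions à droite sur les divers $G_{\kk'}^\an$ et leur projection commune vers $G^\an$.
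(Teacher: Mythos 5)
Votre démonstration suit pour l'essentiel la même voie que celle de l'article~: unicité par densité dans~$\Lambda$ de l'orbite de~$0$ sous~$T(\kk_s)$ (via la densité du groupe des valeurs de~$\kk_s$, obtenue par des racines de l'uniformisante d'ordre premier à la caractéristique), continuité, puis vérification que~$(\Phi^\an\circ\theta_T)\cdot p$ satisfait les hypothèses avec la valeur~$p$ en~$0$. La seule différence notable est que l'article se ramène d'abord, par limite simple puis translation à droite, au cas où~$p$ est l'élément neutre, donc un point~$\kk$-rationnel~: cette réduction garantit que le relèvement~$\theta_{\kk'}(0)$ est uniquement déterminé par~$p$, point que votre «~principe d'unicité~» pour~$p$ quelconque laisse implicite, puisque la composée de l'action à droite de~$\Phi(\mu)\in G(\kk')$ avec la projection~$G_{\kk'}^\an\to G^\an$ ne se factorise pas automatiquement par cette projection lorsque~$\mu\notin T(\kk)$.
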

\begin{proof}
Par limite simple on peut supposer que~$p$ est dans l'image
de~$G(\kk_s)$, puis quitte à considérer une extension finie, que~$p$ appartient à~$G(\kk)$, quitte à faire agir~$p$ à droite, que~$p$ est l'élément neutre.

Par continuité de~$\theta$ et~$\theta_T$ il suffit de montrer~$\theta(\lambda) = (\Phi^\an\circ\theta_T (\lambda))$ pour un ensemble
dense de~$\lambda$ dans~$\Lambda$.

Or l'orbite dans~$\Lambda$ de~$0$ sous l'action de~$T (\kk_s)$ est dense.
Cela provient de la description de cette action pour tout corps local contenu dans~$\kk_s$, grâce à la section~\ref{sectionD31} et au fait que quitte à considérer des racines de l'uniformisante d'ordre premier à la caractéristique, on peut approcher tout nombre réel positif par la valeur absolue d'éléments de~$\kk_s$.

Enfin~$\theta$ et~$(\Phi^\an \circ \theta_T (\lambda))$ concordent en~$0$ et vérifient la loi de transformation (cf. section~\ref{sectionD31}).
\end{proof}
Des sections~\ref{sectionD31} et~\ref{sectionD4} on déduit que cette Proposition~\ref{propD1} a pour corollaire le suivant.
\begin{corollaire}\label{coroD2}
Soit~$\theta:\Lambda\to G^\an$ une application telle que dans la Proposition~\ref{propD1}. Alors pour toute fonction régulière~$f$ sur~$G$, la fonction réelle~$\abs{f}\circ\theta$ est logarithmiquement convexe.
\end{corollaire}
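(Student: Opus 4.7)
Le plan est d'appliquer directement la Proposition~\ref{propD1} afin d'identifier~$\theta$ à une application de la forme~$(\Phi^\an\circ\theta_T)\cdot p$, où l'on pose~$p=\theta(0)\in G^\an$. Tout revient alors à établir la log-convexité de~$\abs{f}\circ((\Phi^\an\circ\theta_T)\cdot p)$, ce qui consiste à reprendre les arguments des sections~\ref{sectionD3} et~\ref{sectionD4} en observant qu'ils donnent en fait mieux que la convexité: ils fournissent la convexité \emph{logarithmique}.

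Plus précisément, pour~$g\in G(\kk_s)$ et~$f\in\kk[G]$, je partirais de la décomposition~$f\circ(\Phi_{\kk_s}\cdot g)=\sum_{\chi\in X(T)}a_\chi(g)\cdot\chi$ dans~$\kk_s[X(T)]$, puis j'invoquerais l'identité clef
\[
\log\bigl(\abs{f}\circ((\Phi^\an\circ\theta_T)\cdot g)(\lambda)\bigr)=\max_{\chi\in X(T)}\bigl(\log\abs{a_\chi(g)}+\langle\chi,\lambda\rangle\log\abs{\varpi}\bigr),
\]
qui présente le logarithme comme maximum fini de formes affines en~$\lambda$, donc comme une fonction convexe. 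La log-convexité de~$\abs{f}\circ((\Phi^\an\circ\theta_T)\cdot g)$ en résulte, pour tout~$g\in G(\kk_s)$.

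Pour~$p\in G^\an$ général, l'application~$(\Phi^\an\circ\theta_T)\cdot p$ étant par construction de la section~\ref{sectionD42} limite simple d'applications de la forme~$(\Phi^\an\circ\theta_T)\cdot g$ avec~$g\in G(\kk_s)$, la fonction~$\abs{f}\circ\theta$ est elle-même limite simple de fonctions logarithmiquement convexes. Or la log-convexité se conserve par limite simple, car l'inégalité~$\log h(tx+(1-t)y)\leq t\log h(x)+(1-t)\log h(y)$ l'est de manière évidente. On conclut.

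L'unique point de vigilance -- mais bien mineur -- est de s'astreindre à maintenir la formulation logarithmique à chaque étape, plutôt que de se contenter de la convexité ordinaire telle qu'invoquée en section~\ref{sectionD42}; la forme explicite en maximum de fonctions affines rend toutefois cette log-convexité manifeste, et sa stabilité par limite simple clôt l'argument sans aucune difficulté.
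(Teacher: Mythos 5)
Votre démonstration est correcte et suit essentiellement la même voie que le texte, qui se contente de renvoyer à la Proposition~\ref{propD1} et aux sections~\ref{sectionD3} et~\ref{sectionD4}: identification de~$\theta$ à~$(\Phi^\an\circ\theta_T)\cdot p$ avec~$p=\theta(0)$, écriture du logarithme comme maximum fini de fonctions affines pour~$g\in G(\kk_s)$, puis passage à la limite simple. Votre remarque sur la nécessité de conserver la formulation \emph{logarithmique} (là où la section~\ref{sectionD42} ne mentionne que la convexité) est pertinente et comble utilement ce léger flou du texte.
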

\section{Immeubles euclidiens de Bruhat-Tits}

\subsection{Avant-propos} L'\emph{immeuble euclidien} de Bruhat-Tits d'un groupe algébrique semisimple~$G$ sur un corps local ultramétrique~$\kk$ est un analogue, dans le contexte ultramétrique, de l'espace riemannien symétrique des sous-groupes compacts maximaux associé à un groupe de Lie semi-simple réel~$L$, qui s'écrit~$L/K$, pour un sous-groupe compact maximal~$K$ de~$L$. Dans le contexte ultramétrique, l'analogue naïf du théorème de Cartan ne vaut plus:~$G(\kk)$ ne contient en général pas, à conjugaison près, d'unique sous-groupe compact maximal. Toutefois, du
moins pour les groupe déployés, les énoncés 5.3.1 et 5.3.3 de~\cite{Ber90} restituent \emph{a
posteriori} cette facette de l'analogie entre immeubles et espaces symétriques.

\subsection{Propriétés}
Soit~$\kk$ un corps local muni d'une valeur absolue ultramétrique~$\abs{-}$, et soit~$G$ un groupe algébrique semi-simple sur~$\kk$. L'\emph{immeuble euclidien de Bruhat-Tits} de~$G$ sur~$\kk$, ou plus simplement «~immeuble~», désigne un certain espace métrique~$\Ik(G)$ muni d'une action fidèle proprement continue du groupe topologique~$G(\kk)$ (à gauche, par isométries). L'immeuble est uniquement défini à unique isométrie près. (cf~\cite[2.1]{Tit79}). Le stabilisateur
de l'immeuble est réduit au centre de~$G$. Les stabilisateurs des points de~$\Ik(G)$ sont des sous-groupes dits \emph{parahoriques} de~$G(\kk)$; 
ils sont compacts et ouverts dans~$G(\kk)$: suivant~\cite[Introduction]{BT84} (voir aussi~\cite[3.4.1]{Tit79}), ce sont des groupes de la forme~$G(\mathscr{O}_\kk)$ pour certaines formes entières de~$G$ sur l'anneau des entiers ultramétriques~$\mathscr{O}_\kk$ («~schémas
en groupes plats prolongeant G~»).

L'immeuble~$\Ik(G)$ admet une famille distinguée de parties, appelées \emph{appartements}, réunissant les propriétés suivantes.
\begin{enumerate}[label=\alph*.]
\item \label{a.} L'ensemble des appartements est stable sous l'action de~$G(\kk)$, et l'action de~$G(\kk)$ sur l'ensemble des appartements est transitive.
\item \label{b.} Les appartements sont isométriques à un espace vectoriel euclidien. En particulier, pour tout appartement~$A$, et tout~$g$ dans~$G(\kk)$, l'isométrie~$A\xrightarrow{} g A$ est une application affine.
\item \label{c.} L'immeuble~$\Ik(G)$ est réunion de ses appartements, et tout point a un voisinage formé d'une réunion finie d'appartements.
\item \label{d.} Le stabilisateur dans~$G(\kk)$ d'un appartement donné agit via un réseau du grou\-pe d'isométries de cet appartement.
\item \label{e.} Deux points quelconques de~$\Ik(G)$ sont contenus dans un appartement commun \cite[7.14.18]{BT72}.
\item \label{f.} L'immeuble~$\Ik(G)$ un espace de Hadamard : l'inégalité CAT(0) est satisfaite \cite[3.2]{BT72}.
\item \label{g.} L'immeuble~$\Ik(G)$ admet une structure polysimpliciale~$G(\kk)$-invariante, dont un polysimplexe est domaine fondamental et intersection d'appartements. Le stabilisateur d'un appartement donné préserve un pavage issu d'une structure polysimpliciale qui s'étend en une structure polysimpliciale invariante
sur~$\Ik(G)$.
\end{enumerate}

\subsection{Avertissement}
Nous nous reposons sur~\cite{RTW09} pour la construction
de l'immeuble de Bruhat-Tits. Les hypothèses de travail de ces auteurs sont
énoncées en~\cite[1.3.4]{RTW09}, numéro qui, par ailleurs, indique explicitement que
ces hypothèse sont vérifiées si le corps de base~$\kk$ est un corps local, ce qui est
le cas considéré ici. Une autre référence dans le cas des corps locaux est~\cite{Tit79}.
Cette dernière, reposant sur l'exposé axiomatique~\cite{BT72}, indique, en~\cite[1.5]{Tit79},
quelques réserves sur la satisfiabilité des hypothèses de~\cite{BT72}, qui sont ramenées,
en ce qui concerne~\cite{Tit79}, aux propriétés 1.4.1 et 1.4.2 de~\cite{Tit79}. La
suite~\cite{BT84} de l'exposé~\cite{BT72}, suite postérieure à la référence~\cite{Tit79}, démontre
que les hypothèses de~\cite{BT72} sont satisfaites « pour tout groupe réductif sur un
corps de valuation discrète hensélien à corps résiduel parfait » (Introduction,
page 9).

Nous esquissons la construction de l'immeuble indiquée dans~\cite{RTW09}. Que
cette construction vérifie les propriétés indiquées plus haut résulte, pour certaines
de ces propriétés, de la construction même, qui procède par analyse-synthèse.
Pour les autres propriétés cela résulte d'une part de la satisfiabilité des hypothèse
de travail de~\cite{BT72} pour les corps locaux pour laquelle nous venons d'indiquer
des références ; d'autre part de certaines conclusions de~\cite{BT72}; enfin, dans le cas
des corps locaux notamment, de la référence~\cite{Tit79}.

\subsection{Construction}\label{sectionE4} Comme tout appartement A de l'immeuble~$\Ik(G)$ est une partie génératrice, on peut construire~$\Ik(G)$ comme quotient de~$G(k) \times A$. Les relations par lesquelles on quotiente sont engendrées par celles qui définissent le stabilisateur de chaque point de~$A$, et celles qui déterminent l'action sur~$A$ du stabilisateur
de~$A$. C'est l'approche utilisée dans~\cite[1.3]{RTW09}.

Ne discutons que du modèle de A muni de l'action de son stabilisateur.
Fixons un tore déployé maximal~$T$ de~$G$, et posons~$A = \Lambda = \Hom(\GL(1),T ) \tens \R$.
Le normalisateur~$N(T )(\kk)$ de~$T$ dans~$G(\kk)$ agit par transport de structure sur~$\Lambda = \Hom(\GL(1),T )\tens\R$, et le noyau de cette action est le centralisateur~$C(T )(\kk)$
de~$T$ dans~$G(\kk)$. Le groupe quotient~$N(T )(\kk)/C(T )(\kk)$ est un groupe fini, le groupe de Weyl sphérique de~$G$ relatif à~$T$. Alors le stabilisateur de l'appartement~$A$ dans~$G(\kk)$
est~$N(T )(\kk)$ et l'action de~$N(T )(\kk)$ est une action affine
\begin{itemize}
\item dont la partie linéaire est l'action précédente, 
\item et pour laquelle~$T (\kk)$, qui est contenu dans~$N(T )(\kk)$, agit par~\ref{sectionD5}.
\end{itemize}
\section{{Convexit\'{e}}}\label{sectionF}
\subsection{{Notion de convexit\'{e}}}
Rappelons qu'un~\emph{appartement} de~$\Ik(G)$ est l'image d'une partie de la forme~$\{g\}\times\Lambda$ par l'application~$G(\kk)\times\Lambda\rightarrow \Ik(G)$. Les appartements sont permutés transitivements sous l'action de~$G(\kk)$. En outre, le stabilisateur d'un appartement~$A$ agit sur~$A$ de manière affine et cette action contient l'action additive d'un réseau vectoriel de~$\Lambda$ (voir \cite[1.2, 1.3]{Tit79}).

\subsubsection{}\label{sectionF11}
En particulier, si~$F$ est un parallélotope fondamental de ce réseau,~$F$ est une partie bornée qui rencontre tout orbite de~$G(\kk)$ rencontrant cet appartement, ce qui est le cas de toute orbite de~$G(\kk)$. L'immeuble~$\Ik(G)$ contient donc une partie génératrice compacte.

Rappelons que deux points quelconques~$x$ et~$y$ de~$\Ik(G)$ sont contenus dans un appartement commun~(\cite[7.14.18]{BT72}). on peut donc définir le segment~$[x;y]$ joignant~$x$ à~$y$ dans~$A$. Comme le stabilisateur d'un appartement~$A$ agit de manière affine sur~$A$, ni le segment~$[x;y]$ ni la structure affine sur~$[x;y]$ ne dépendent de l'appartement choisi (cf.~\cite[2.2.1]{Tit79}).

\label{convexité}
Une fonction réelle continue sur~$\Ik (G)$ dont la restriction à tout segment
est \emph{con\-ve\-xe} (resp. \emph{affine}), sera dite convexe (resp. affine). Il revient au même
de dire que la restriction à chaque appartement est convexe (resp. affine),
relativement à la structure affine de cet appartement.

Bien évidemment, les fonctions affines et les fonctions dont le logarithme est
affine sont convexes. En outre, toute fonction réelle qui s'écrit comme borne
supérieure ou limite simple de fonctions convexes est convexe.

\label{convexite2}
Une partie de~$\Ik(G)$ sera dite convexe si elle contient tout segment dont elle contient les extrémités.

\subsection{Une décomposition de l'immeuble}
Soit~$k$ un sous-groupe compact de~$G(\kk)$. Le groupe compact~$k$ agit par isométries sur l'immeuble~$\Ik(G)$.
Notons~$\Ik(G)^k$ le lieu fixe de l'action de~$k$, et~$Z_G(k)$ le centralisateur de~$k$ dans~$G$.
Alors~$\Ik(G)^k$ est stable sous l'action du centralisateur~$Z_G(k)(\kk)$ de~$k$ dans~$G(\kk)$.
Nous allons montrer le premier énoncé suivant.

\begin{theoreme}\label{TheoF1}\label{propF1}
Soit~$k$ un sous-groupe compact de~$G(\kk)$ dont l'adhérence
de Zariski dans~$G$ est un sous-groupe fortement réductif, et soit~$Z_G (k)$ le centralisateur de~$k$ dans~$G$. Alors il existe une partie compacte non vide~$C$ de~$\Ik(G)^k$ qui est génératrice pour l'action de~$Z G (k)(\kk)$: on a~$\Ik(G)^k=Z_G (k)(\kk)\cdot C$.
\end{theoreme}
Commençons par un lemme. 
\footnote{N.d.É.: Ce lemme gagne à être mieux connu. Il permet par exemple de simplifier l'argumentation finale de~\cite[]{UllmoYaffaev} en coupant court à leur introduction de mesures.}

\subsection*{\emph{Erratum}}\label{Erratum} -- La version initiale de ce Lemme, supposant que l'on a « dans~$G$~[...]~un sous-groupe réductif~» est erronée (comme pointé par E.~Breuillard). Une confusion a eu lieu sur la terminologie «~\emph{reduced}~» de \cite[p. 101]{PR94}, qui n'est pas le sens usuel algébrico-géométrique. Plutôt que la notion utilisée par~\cite[p.~101]{PR94}, la notion idoine est celle de sous-groupe «~\emph{strongly reductive in~$G$}~», basée sur le travail~\cite[\S16]{Richardson} de Richardson. Ce dernier travaille toutefois sur des corps algébriquement clos, d'a\-près~\cite[\S1.1]{Richardson}. 

-- Cette correction affecte le Théorème~\ref{TheoF1}, la Proposition~\ref{Prop21} et la première conclusion du Théorème~\ref{Theo11}.

-- En caractéristique~$0$ on retombe sur la notion usuelle de groupe réductif; voir à ce sujet~\cite[Proposition~4.2]{SerreCR}. Il n'y a pas contagion de l'erreur de ce côté là. C'est le cas des applications mentionnées en dynamique.

-- Cette dernière référence contient une revue détaillée de la notion de «~strong reductivity in~»\footnote{Plutôt que ``«strict reductivity»'' que l'on lit en \cite[page~932-13]{SerreCR}.} rebaptisée (une fois généralisée à un corps~$k$ quelconque)~«~$G$-cr~», pour «~complète réductibilité~». Pointons~\cite[\S3.1.1, Th~3.5, Th.~3.7, \S~4.1]{SerreCR}.

-- Rappelons que~$H$ est dit «~réductif dans~»~$G$ au sens de~\cite{RichardShah} si l'action de~$H$ sur~$\lie{g}$ est semi-simple. Alors~$(HZ_G)\cap G^\text{der}$ est «~\emph{reduced subgroup}~» de~$G$ au sens de~\cite{PR94}. Il est donc \emph{strongly reductive in~$G$} d'après~\cite[Th.~2.16]{PR94} et~\cite[Th.16.4]{Richardson}. Comme le notion \emph{strongly reductive} s'écrit en termes des sous-groupes paraboliques contenant~$H$, elle est insensible au passage de~$H$ à~$HZ_G$ puis à~$HZ_G\cap G^\text{der}$, le sous-groupe~$H$ lui-même est \emph{stronlgy reductive in~$G$}.

\begin{lemme}\label{LemmeF2}\label{lemmeF2}\footnote{N.d.É.: Bien que l'on se réfère à~\cite{PR94}, ce lemme est basé sur le travail~\cite{Richardson} de Richardson, algébrique, dont c'est une variante, et conséquence, ultramétrique.}
Soit~$k$ un sous-groupe compact de~$G(\kk)$ dont l'adhérence
de Zariski dans~$G$ est un sous-groupe fortement réductif dans~$G$,  et soit~$Z_G (k)$ le centralisateur de~$k$ dans~$G$.

Pour tout compact~$C$ de~$G(\kk)$, il existe un compact~$C'$ de~$G(k)$ tel que le transporteur
\begin{equation}
T (k, C) = \left\{g \in G(k)\middle|gkg^{-1} \subseteq C\right\}
\end{equation}
de~$k$ dans~$C$ s'écrive~$C'\cdot Z_G (k)(\kk)$.

\end{lemme}
\begin{proof}
 Tout d'abord ce transporteur est fermé. Pour chaque~$x$ dans~$k$ l'application~$g \mapsto g xg^{-1}$ est continue, l'image inverse de~$C$ est fermée; ce transporteur est l'intersection de ces images inverses, des fermés. Tout revient donc à montrer que~$T (k,C )$ est le
saturé par~$Z_G (k)(\kk)$ d'une partie relativement compacte. Comme~$T (k,C )$ est manifestement invariant par~$Z_G (k)(\kk)$, il suffit de montrer qu'il est contenu dans le saturé d'une partie compacte.

Remarquons que, pour la topologie de Zariski,~$k$ est noethérien. Par con-
séquent, pour cette topologie, il est topologiquement de type fini. En particulier, il existe une partie finie~$\{x_1 ; \ldots ; x_n \}$ topologiquement génératrice de~$k$ pour la topologie de Zariski. Le centralisateur de cette partie est le centralisateur de~$k$, c'est-à-dire~$Z_G (k)$.

Appliquant~\cite[Theorem 16.4]{Richardson}, on montre que l'application
\[
g\mapsto (gx_1g^{-1};\ldots;gx_ng^{-1})
\]
induit une immersion fermée de~$G/Z_G(k)$ dans~$G^n$. Par conséquent l'application correspondante
\[
\phi:\left(G/Z_G(k)\right)(\kk)\to G(\kk)^n
\]
est~\emph{propre}. En particulier l'image inverse de~$C^n$ dans~$\left(G/Z_G(k)\right)(\kk)$ est compacte. Or cette image inverse de~$C^n$ contient l'image de~$T(k,C)$ par~$G(\kk)\to\left(G/Z_G(k)\right)(\kk)$.

Il suffit donc de montrer que tout compact de~$\left(G/Z_G(k)\right)(\kk)$ rencontre l'image par~$\phi$ de~$G(\kk)$ en l'image d'un compact. D'après~\cite{PR94}, l'application~$\phi$ est ouverte, et les orbites de~$G(\kk)$ dans~$\left(G/Z_G(k)\right)(\kk)$ sont toutes ouvertes; elles sont donc aussi fermées. En particulier l'image~$\phi(G(\kk))$ est fermée, et intersecte donc tout compact en un compact. Il suffit de montrer que tout compact de~$G(\kk)/Z(\kk)$ est l'image d'un compact. D'après la propriété de Borel-Lebesgue, il suffit de travailler localement (c.à.d. montrer que tout ouvert assez petit est contenu dans l'image d'un compact). Or~$\phi$ est ouverte et~$G(\kk)$ est localement compact.
\end{proof}

Avant de démontrer le Théorème~\ref{TheoF1}, rappelons quelques faits, dont certains
sont bien connus.
\begin{enumerate}[label=\alph*.]
\item \label{a.} Il existe un compact~$F$ de~$\Ik(G)$ rencontrant toute orbite de~$G(\kk)$ (cf.~\ref{sectionF11}).
\item \label{b.} Pour toute partie bornée non vide~$P$ de~$\Ik(G)$, le stabilisateur de~$P$ dans~$G(\kk)$ est un sous-groupe compact et ouvert (cf.\cite[3.2]{Tit79}, \cite[Introduction]{BT72}).
En particulier, pour tout point~$p$ de~$\Ik (G)$, le fixateur de~$p$
dans~$G(k)$ est un sous-groupe compact et ouvert.
\item \label{c.} Le fixateur commun à tous les éléments d'une partie bornée non vide est
compact et ouvert (cf. \emph{supra}).
\item \label{d.} Pour tout sous-groupe ouvert~$U$ de~$G(\kk)$, le lieu fixe de~$U$ dans~$G$ est une partie compacte de~$\Ik(G)$.
\item \label{e.} Tout sous-groupe compact de~$G(\kk)$ est contenu dans un sous-groupe compact maximal.
\item \label{f.} Les sous-groupes compacts maximaux de~$G(\kk)$ sont ouverts.
\item \label{g.} Ils forment un nombre fini de classes de conjugaison (\cite[3.3.3]{BT72}).
\item \label{h.} Tout sous-groupe compact ouvert de~$G(\kk)$ est contenu dans un nombre fini de sous-groupe compact maximaux.
\end{enumerate}
\begin{proof}[Démonstration du Théorème~\ref{TheoF1}]
D'après le point~\ref{a.}, il existe un compact~$F$ de~$\Ik (G)$ rencontrant toute orbite de~$G(\kk)$. D'après le point~\ref{b.}, le stabilisateur commun à tous les points de~$F$ est un sous-groupe \emph{compact et ouvert} de~$G(\kk)$. Notons-le~$K_F$.

Pour tout point~$f$ de~$F$ le stabilisateur de~$f$, disons~$K_f$, est également un sous-groupe compact et ouvert de~$G(\kk)$ (point~\ref{b.} ci-dessus). Par construction ces groupes contiennent~$K_F$. Appliquant le point~\ref{h.}, il s'ensuit que l'ensemble~$E_F=\{K_f|f\in F\}$ de sous-groupes compacts et ouverts de~$G(\kk)$ est un ensemble \emph{fini}.

Pour~$K$ dans~$E_F$, notons~$F_K = \{ f \in F |K_f = K \}$. Par construction de~$E_F$, le compact~$F$ s'écrit comme l'union finie~$F = \bigcup\{F_K |K \in EF \}$. Notons que chaque~$F_K$, étant
contenu dans~$F$, est borné.

Soit~$k$ le groupe compact mentionné dans l'énoncé du théorème. Pour~$g$
dans~$G(\kk)$ et~$f$ dans~$F_K$, le point~$g\cdot f$ de~$\Ik (G)$ est fixé par~$k$ si et seulement si~$g^{-1}kg$ est contenu dans le stabilisateur de~$f$, c'est-à-dire dans~$K$.  Autrement dit~$g^{-1}$ est
dans le transporteur~$T (k, K )$ de~$k$ dans~$K$. D'après le Lemme~\ref{lemmeF2}, il existe un compact~$C_K$ de~$G(\kk)$ tel que~$T(k,K)$ s'écrive~$C_K\cdot Z_G(k)$. Par conséquent, l'élément~$g\cdot f$ ci-dessus appartient à la partie~$Z_G(k)(\kk)\cdot {C_K}^{-1}\cdot F_K$ du lieu fixe~$\Ik(G)^k$ de~$k$ agissant sur l'immeuble~$\Ik(G)$.

On a montré que tout point de la forme~$g\cdot f$, avec~$g$ dans~$G(\kk)$ et~$f$ dans~$F_K$ appartient en fait à~$Z_G(k)(\kk)\cdot {C_K}^{-1}\cdot F_K$.

Comme~$F$ rencontre toute orbite de~$G(\kk)$ dans~$\Ik (G)$, tout point~$p$ de~$\Ik (G)$ s'écrit~$f\cdot g$ avec~$f$ dans~$F$ et~$g$ dans~$G(\kk)$.  Ainsi tout point fixe de k appartient
à~$Z_G(k)(\kk)\cdot {C_K}^{-1}\cdot F_K$ pour un certain~$K$ dans~$F$.  Par conséquent,~$\Ik (G)^k$ est contenu dans~$\bigcup_{K\in E_F}Z_G(k)(\kk)\cdot {C_K}^{-1}\cdot F_K$.

Comme~$E_F$ est fini et que les~$F_K$ et~$C_K$ sont bornés, la partie~$\bigcup_{K\in E_F}Z_G(k)(\kk)\cdot {C_K}^{-1}\cdot F_K$ est bornée, donc son adhérence est compacte. Remarquons que~$\Ik(G)$ est fermé et invariant sous l'action de~$Z_G(k)(\kk)$. Par conséquent
\[
C=\overline{\bigcup_{K\in E_F}{C_K}^{-1}\cdot F_K}\cap \Ik(G)^k
\]
est un compact de~$\Ik(G)$ tel que~$\Ik(G)^k=CZ_(G)(k)(\kk)$.

\end{proof}
\subsection{Plongement analytique et Convexité des fonctions régulières sur l'immeuble} Nous basant sur~\cite{RTW09}, démontrons l'énoncé suivant.
\begin{theoreme}\label{TheoF3}
Soit~$\kk$ un corps local muni d'une valeur absolue ultramétrique, soit~$G$ un groupe algébrique semi-simple sur~$\kk$ et fixons un tore
algébrique déployé maximal~$T$ dans~$G$. Notons~$\Phi : T \rightarrow G$ le morphisme
d'inclusion, $\theta_T : \Lambda \rightarrow T^\an$ l'application~\eqref{eq23},~$\phi^\an : T^\an \rightarrow G^\an$ l'application correspondante, et~$\Ik(G)$ l'immeuble de Bruhat-Tits de~$G$ sur~$\kk$. 

Pour tout point~$p$ de~$G^\an$, on note~$((\Phi^\an \circ \theta_T )\cdot p) : \Lambda \rightarrow G^\an$ l'application définie dans la
section~\ref{sectionD4}, et pour tout~$g$ dans~$G(\kk)$ , on note~$g((\Phi^\an \circ \theta_T )\cdot p)$ l'application
translatée.

Alors il existe un point~$p$ de~$G^\an$ tel que l'application de~$G(\kk) \times \Lambda$ donnée
par
\begin{equation}\label{eq29}
(g,\lambda)\mapsto g\cdot ((\Phi^\an\circ \theta_T)\cdot p)(\lambda)
\end{equation}
passe au quotient en une application équivariante à gauche
\[
\theta:\Ik(G)\rightarrow G^\an.
\]

En outre on peut supposer que le stabilisateur à droite~$G_p$ de~$p$ dans~$G(k)$ est compact et ouvert.
\end{theoreme}

Dans le cas déployé, cet énoncé résulte de la construction explicite de V. Ber\-ko\-vich, dans \cite[5.3]{Ber90} et du Théorème 5.4.2 qui s'ensuit.

Dans le cas général, notre référence est~\cite{RTW09}, dont l'approche est différente, et repose sur les propriétés de fonctorialité des immeubles par des
extensions, non nécessairement algébriques, de corps ultramétriques. Pour pouvoir démontrer l'énon\-cé~\ref{TheoF3}, nous allons utiliser le critère~\ref{propD1}.
\begin{proof}
Soit~$\Theta:\Ik(G)\times\Ik(G)\to G^\an$ l'application de~\cite[2.3]{RTW09}.
Il suit de~\cite[Proposition~2.12]{RTW09} que l'application~$\Theta$ est continue et vérifie
\[
	\forall x,y\in \Ik(G),
		\forall g,h \in G(\kk),
				\Theta(gx,hy)=h\Theta(x,y)g^{-1}.
\]
Soit~$o$ dans~$\Ik(G)$ et notons~$\Theta_o$ l'application~$x\mapsto\theta(o,x)$. Alors~$\Theta_o$  est une application continue et équivariante de~$\Ik(G)$ dans~$G^\an$. Montrons que~$\Theta_o$ répond à
l'énoncé.

Par équivariance, il suffit donc de montrer qu'elle s'écrit sous la forme~\eqref{eq29} sur un seul appartement, par exemple~$\Lambda$, de~$\Ik (G)$.

Il suffit donc de montrer que la restriction de~$\Theta_o$ à~$\Lambda$ vérifie le critère~\ref{PropD1}. L'application~$\Theta_o$ est bien continue.
Soit~$\kk'$ une  une extension finie de~$\kk$, et soit~$\Theta_{\kk'}$ l'application composée issue du coin supérieur gauche du carré commutatif de \cite[Proposition 2.12 (ii).]{RTW09}.
Par commutativité, l'application~$(\Theta_{\kk'})_o:x\mapsto \Theta_{\kk'}(o,x)$ de~$\Lambda$ dans~$G_{\kk'}^\an$ répond  à la condition~\ref{D11} de~\ref{PropD1}.
Appliquant la proposition 2.12 de~\cite{RTW09} au corps~$\kk'$, nous obtenons que l'application~$(\Theta_{\kk'})_o$ est équivariante à gauche sur~$\Lambda$.
La seconde condition de la Proposition~\ref{PropD1} découle ainsi de la description de l'identité de l'action de~$T(\kk')$ sur~$\Lambda$, pris comme appartement (cf. section~\ref{sectionE4}), avec l'action donnée en section~\ref{sectionD31}.

\end{proof}
En appliquant le Corollaire~\ref{coroD2}, on en déduit ceci.
\begin{corollaire}\label{coroF4}
Soit~$\Theta$ une application~$\Ik(G) \rightarrow G^\an$ telle que dans le
Théorème~\ref{TheoF3}. 

Alors pour toute fonction régulière~$f$ sur~$G$, la fonction réelle~$\abs{f}  \circ\Theta$ est logarithmiquement convexe sur~$\Ik(G)$.
\end{corollaire}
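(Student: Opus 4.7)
Le plan est de se ramener au Corollaire~\ref{coroD2} en utilisant la définition de la convexité sur $\Ik(G)$ de la section~\ref{sectionF}: convexe sur $\Ik(G)$ signifie convexe en restriction à chaque appartement, pour sa structure affine intrinsèque. Comme le Corollaire~\ref{coroD2} fournit exactement la convexité logarithmique sur $\Lambda$ pour une application satisfaisant aux hypothèses de la Proposition~\ref{PropD1}, le travail consiste à combiner cet énoncé avec l'équivariance à gauche de $\Theta$ pour en déduire la convexité logarithmique sur chaque appartement.

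Premièrement, je considérerais l'appartement standard $A_0$ de $\Ik(G)$, image de $\{1\}\times\Lambda$ par l'application de passage au quotient $G(\kk)\times\Lambda\to\Ik(G)$: il s'identifie à $\Lambda$, et la restriction de $\Theta$ à $A_0$ coïncide, par construction issue du Théorème~\ref{TheoF3}, avec l'application $(\Phi^\an\circ\theta_T)\cdot p:\Lambda\to G^\an$. Cette dernière vérifiant les hypothèses de la Proposition~\ref{PropD1} (c'est précisément ce qu'établit la démonstration du Théorème~\ref{TheoF3}), le Corollaire~\ref{coroD2} donne immédiatement la convexité logarithmique de $\abs{f}\circ\Theta$ sur $A_0$ pour toute fonction régulière $f$ sur $G$.

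Deuxièmement, pour un appartement général $A=g\cdot A_0$, avec $g\in G(\kk)$, j'invoquerais l'équivariance à gauche de $\Theta$, soit $\Theta(g\cdot\lambda)=g\cdot\Theta(\lambda)$ pour $\lambda\in A_0$. L'action de $g$ sur $G^\an$ étant déduite par fonctorialité de l'action correspondante sur $\kk[G]$, on a $\abs{f}(g\cdot y)=\abs{f^g}(y)$ pour tout $y\in G^\an$, où $f^g$ est la translatée de $f$ par $g$, qui est encore une fonction régulière sur $G$. La restriction de $\abs{f}\circ\Theta$ à $A$ s'identifie ainsi, via la bijection affine $\lambda\mapsto g\cdot\lambda$ entre $A_0$ et $A$, à la restriction de $\abs{f^g}\circ\Theta$ à $A_0$; cette dernière étant logarithmiquement convexe par l'étape précédente, l'on conclut.

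L'obstacle principal à soigner est la compatibilité des structures affines: il faut vérifier que la structure affine sur $g\cdot A_0$ qui intervient dans la définition de la convexité coïncide bien avec la structure transportée depuis $A_0$ par l'isométrie donnée par l'action de $g$, ce qui découle du caractère affine de l'action de $G(\kk)$ sur les appartements (propriété~b. des immeubles rappelée plus haut). Une attention précise à la convention gauche/droite pour l'action de $G(\kk)$ sur $G^\an$ est par ailleurs requise pour s'assurer que $f^g$ reste effectivement une fonction régulière au sens de $\kk[G]$, et non une fonction sur une variété analytique plus générale.
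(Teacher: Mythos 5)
Votre démonstration est correcte et suit essentiellement la même voie que le texte, qui se contente d'invoquer le Corollaire~\ref{coroD2} en laissant implicites les étapes que vous explicitez (restriction à l'appartement standard, puis transfert aux autres appartements par équivariance et translation de~$f$ par un point rationnel, qui préserve~$\kk[G]$). Les points de vigilance que vous signalez (structure affine transportée, convention gauche/droite) sont réels mais se règlent exactement comme vous l'indiquez.
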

\bibliographystyle{alpha}  
\bibliography{ultrametrique}
\end{document}